\documentclass[11pt]{article}
\usepackage{amsthm}
\usepackage{amsmath,amssymb,  amscd,eucal}
 \usepackage{latexsym}
\usepackage{graphicx}
\usepackage[usenames]{color}
\usepackage{times,tikz}
\usepackage{paralist}
\usetikzlibrary{decorations.markings}

\usetikzlibrary{shapes.geometric,shapes.arrows,decorations.pathmorphing}
\usetikzlibrary{matrix,chains,scopes,positioning,arrows,fit}
\tikzset{
  treenode/.style = {align=center, inner sep=0pt, text centered,
    font=\sffamily},
  arn_n/.style = {treenode, circle, white, font=\sffamily\bfseries, draw=black,
    fill=black, text width=1.5em},% arbre rouge noir, noeud noir
  arn_r/.style = {treenode, circle, red, draw=red,
    text width=1.5em, very thick},% arbre rouge noir, noeud rouge
  arn_x/.style = {treenode, rectangle, draw=black,
    minimum width=0.5em, minimum height=0.5em}% arbre rouge noir, nil
}

\newtheorem{thm}[equation]{Theorem}
\newtheorem{cor}[equation]{Corollary}

\newtheorem{prop}[equation]{Proposition}
\newtheorem{para}[equation]{}

\newtheorem{remark}[equation]{Remark}
\newtheorem{example}[equation]{Example}

\numberwithin{equation}{section}
%\numberwithin{figure}{section}

\newcommand{\angstrom}{\mbox{\normalfont\AA}}

\renewcommand{\det}{\mathsf{det}}
  \newcommand{\diag}{\mathsf {diag}}

\newcommand{\ZZ}{\mathbb{Z}}
\newcommand{\CC}{\mathbb{C}}
\newcommand{\cE}{\mathcal{E}}
\newcommand{\arm}{\mathsf{a}}
\newcommand{\br}{\mathsf{b}}
\newcommand{\cm}{\mathsf{c}}
\newcommand{\er}{\mathrm{e}}
\newcommand{\gr}{\mathrm{g}}

\newcommand{\Ar}{\mathrm{A}}

\newcommand{\Sr}{\mathrm{S}}
\newcommand{\Tr}{\mathrm{T}}

\newcommand{\al}{\alpha}
\newcommand{\lam}{\lambda}
\newcommand{\ve}{\varepsilon}
\newcommand{\es}{\mathsf{e}}

\newcommand{\ms}{\mathsf{m}}
\newcommand{\ps}{\mathsf{p}}

\newcommand{\EE}{\mathsf{E}}
\newcommand{\GG}{\mathsf{G}}

\newcommand{\TT}{\mathsf{T}}

\newcommand{\VV}{\mathsf{V}}
\newcommand{\XX}{\mathsf{X}}
 \newcommand{\SU}{\mathsf{SU}}

\newcommand{\Zs}{\mathsf{Z}}
\newcommand{\ZsZ}{\Zs_k(\ZZ_2^n)}
\newcommand{\ZsG}{\Zs_k\big(\GG(2,1,n)\big)}
\newcommand{\ZsS}{\Zs_k(\Sr_n)}

\newcommand\End{\mathsf {End}}

\renewcommand{\cosh}{\mathsf{cosh}}
\renewcommand{\sinh}{\mathsf{sinh}}

\newcommand{\binombr}[2]{\genfrac{\{}{\}}{0pt}{}{#1}{#2} }

\def \ot {\otimes}
\def\modd{\, \mathsf{mod} \,}

\def\dimm{\, \mathsf{dim}\,}

%\usepackage[letterpaper,margin=1.38in]{geometry}

%% comments macros %%
\begin{document}
\title{A Schur-Weyl Duality Approach to \\ Walking on Cubes}
\author{Georgia Benkart and Dongho Moon\thanks{This research was supported by the Basic Science Research Program of the National Research Foundation of Korea (NRF) funded by the Ministry of Education, Science and Technology (2010-0022003).
The hospitality of the Mathematics Department at the University of Wisconsin-Madison while this research
was done  is gratefully
acknowledged.}} \date{}
\maketitle
\vspace{-.8cm}
\begin{center} {\it In memory of Professor Hyo Chul Myung}
\end{center}

\begin{abstract} Walks on the representation graph $\mathcal R_{\VV}(\GG)$ determined
by a group $\GG$ and a $\GG$-module $\VV$ are related to the centralizer algebras
of the action of $\GG$ on the tensor powers $\VV^{\ot k}$  via Schur-Weyl duality.
This paper explores that connection when the group is $\ZZ_2^n$ and the module $\VV$ is
chosen so the representation graph is the $n$-cube.  We describe a basis for the centralizer
algebras in terms of labeled partition diagrams.  We obtain an expression for the number
of walks  by counting certain partitions  and determine the exponential generating functions for the number of walks.  \end{abstract}
\textbf{MSC Numbers (2010)}:  05E10, 20C05   \hfill \newline
\textbf{Keywords}:   $n$-cube, Schur-Weyl duality

\begin{section} {Introduction} \end{section}

Walks on graphs have widespread applications in modeling networks,
particle interactions,  biological and random processes,  and many other phenomena. Typically, the walker (an impulse,
physical or biological quantity, or person) transitions from one node to another along an edge, which may have an assigned probability.   Some natural questions  that arise in this context are:
 How many  different walks of $k$ steps are there from
node $\arm$  to node $\br$ on the graph?  What is the probability that a particle moves from $\arm$ to $\br$  in
$k$ steps?

 Walks on graphs are also related to  chip-firing games,   or to what is often referred to
in physics  as the (abelian) sandpile model.
In a chip-firing game, each node starts with  a pile of chips.
A step consists of selecting a node with at least as many chips as its degree
and moving one chip  from that node to each of its adjacent neighbors.  The game continues
indefinitely or  terminates when no more firings are possible.  In the latter case,
 the number of steps is related to the least positive eigenvalue of
the Laplace operator of the graph \cite{BLS},  and it is bounded by an expression in
the Dirichlet eigenvalues \cite{CE}.

The graphs considered here  arise from the representation theory of groups  in the following way:   Let $\GG$ be a finite group and $\VV$  be a finite-dimensional $\GG$-module over the complex field $\CC$.
The representation graph $\mathcal{R}_\VV(\GG)$  of $\GG$ associated to $\VV$  has
nodes corresponding to the irreducible $\GG$-modules $\{\GG^\lambda \mid \lam \in \Lambda(\GG)\}$ over $\CC$.
For $\mu  \in \Lambda(\GG)$, there are $a_{\mu,\lam}$  edges
from $\mu$ to $\lam$ in  $\mathcal{R}_\VV(\GG)$  if
$$\GG^\mu \ot \VV =  \bigoplus_{\lam \in \Lambda(\GG)} a_{\mu,\lam}  \GG^\lam.$$
  Thus, the number of edges $a_{\mu,\lam}$ from $\mu$ to  $\lam$ in  $\mathcal{R}_\VV(\GG)$ is the multiplicity of $\GG^\lambda$  as a summand of $\GG^\mu \ot \VV$.

Let $\GG^{\mathbf 0}$ be the trivial one-dimensional $\GG$-module on which every element of $\GG$
acts as the identity transformation.  Since each step on the graph is achieved by tensoring with $\VV$,
\begin{align*} \ms_k ^\lam: &= \text{number  of walks of $k$ steps from $\mathbf 0$  to} \ \lam \\
& = \text{multiplicity of  $\GG^\lam$   in   $\GG^{\mathbf 0} \ot  \VV^{\ot k}  \cong \VV^{\ot k}$.}
\end{align*}
When the action of $\GG$ on $\VV$ is faithful, then every irreducible $\GG$-module $\GG^{\lam}$ occurs
in some $\VV^{\ot k}$.

The {\it centralizer algebra},
\begin{equation}\label{eq:cent} \Zs_k(\GG) = \{z \in \End(\VV^{\ot k}) \mid  z(g.w) = g.z(w) \  \ \forall \ g \in \GG,  w \in \VV^{\ot k}\}, \end{equation}
plays an essential role in studying $\VV^{\ot k}$, as it contains the projection maps onto  the irreducible summands of $\VV^{\ot k}$.

Let $\Lambda_k(\GG)$
denote the subset of $\Lambda(\GG)$ corresponding to the  irreducible $\GG$-modules which occur in $\VV^{\ot k}$ with
multiplicity at least one.    \emph{Schur-Weyl duality}
establishes important connections between the representation theories of $\GG$ and $\Zs_k(\GG)$:
 \begin{itemize}
  \item  the irreducible $\Zs_k(\GG)$-modules are in bijection with the elements of $\Lambda_k(\GG)$;
 \item   the $\GG$-module decomposition of \ $\VV^{\ot k}$ into irreducible summands  is given by
$\VV^{\ot k}  \,\cong \,  \bigoplus_{\lambda \in \Lambda_k(\GG)} \ms_k^\lambda \GG^\lambda,$  where \ $\ms_k^\lambda$ is the number of walks of  $k$   steps from  $\mathbf 0$  to  \ $\lam$
on the representation graph $\mathcal{R}_\VV(\GG)$;
  \item  the  $\Zs_k(\GG)$-module decomposition of \ $\VV^{\ot k}$ into irreducible $\Zs_k(\GG)$-modules
  $\Zs_k^\lam$, $\lam \in \Lambda_k(\GG)$,  is given by  \
  $\VV^{\ot k} \, \cong \,  \bigoplus_{\lambda \in \Lambda_k(\GG)}
  \mathsf {d}^\lambda \,  \Zs_k^\lambda$,  where
$$\mathsf{d}^\lambda = \dimm \GG^\lambda \quad  \text{and} \quad \ms_k^\lambda = \dimm \Zs_k^\lambda.$$
  \item $ \dimm\Zs_k(\GG) = \sum_{\lambda \in \Lambda_k(\GG)}  (\dim \, \Zs_k^\lambda)^2 =  \sum_{\lambda \in \Lambda_k(\GG)}  (\ms_k^\lambda)^2  \ = \ \ms_{2k}^{\mathbf{0}}$, \\
(the number  of walks of \ $2k$  \ steps from  $\mathbf{0}$  to \ $\mathbf{0}$ \  on \  ${\mathcal R}_\VV(\GG)$).
\end{itemize}

The following result, which was shown in \cite{B},
gives an efficient way of  computing the Poincar\'e series
$$\ms^\mu(t) = \sum_{k \geq 0}  \ms_k^\mu t^k$$
for the multiplicities $\ms_k^\mu$, $\mu \in \Lambda(\GG)$
(that is,  for the number of walks of $k$ steps from $\mathbf 0$ to $\mu$ on the
representation graph $\mathcal R_{\VV}(\GG)$, or equivalently,  for the dimension
of the centralizer modules $\Zs_k^\mu$, $k \geq 0$).  We assume $\VV^{\ot 0} = \GG^{\mathbf 0}$ and  the columns of the adjacency 
matrix have been indexed so that the one corresponding to $\mathbf 0$ is the first. 
\medskip

\begin{thm}\label{T:Poin}{\rm ([B, Thm.~2.1])} \ Let $\GG$ be a finite group with irreducible modules $\GG^\lam$, $\lambda \in \Lambda(\GG)$,  over $\CC$, and let $\VV$ be a finite-dimensional $\GG$-module such  that the action of $\GG$ on $\VV$ is faithful.  Assume $\ms^\mu(t) = \sum_{k \geq 0} \ms_k^\mu t^k$ is the Poincar\'e series for the multiplicities $\ms_k^\mu$ ($k \geq 0$)  of
$\GG^\mu$ in  $\mathsf{T}(\VV) = \bigoplus_{k \geq 0} \VV^{\ot k}$.
Let $\mathrm{A}$ be the adjacency matrix of the
representation graph $\mathcal R_{\VV}(\GG)$,  and let  $\mathrm{M}^\mu$ be the
matrix $\mathrm{I}-t\mathrm{A}$ with the column indexed by $\mu$ replaced by
$\underline{\delta} = \left (\begin{smallmatrix}  1\\ 0 \\ \vdots \\  \\ 0 \end{smallmatrix}\right )$.
Then
\begin{equation}\label{eq:main} \ms^\mu(t) =   \frac{\det(\mathrm{M}^\mu)}{\det(\mathrm{I}-t\mathrm{A})} =
 \frac{\det(\mathrm{M}^\mu)}{\prod_{g \in \Gamma} \left(1- \chi_\VV(g)t\right)}, \end{equation}
where $\Gamma$ is a set of conjugacy class representatives of $\GG$ and $\chi_\VV(g)$
is the value of the character $\chi_\VV$ of $\VV$ on $g$.
\end{thm}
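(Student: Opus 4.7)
The plan is to express $\ms_k^\mu$ as an entry of $\Ar^k$, pass to a generating function, apply Cramer's rule, and then factor the resulting denominator using the character table of $\GG$.

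First, I would prove by induction on $k$ that $\ms_k^\mu = (\Ar^k)_{\mathbf{0}, \mu}$. The base case $k=0$ follows from $\VV^{\otimes 0} = \GG^{\mathbf{0}}$, and the inductive step comes from decomposing $\VV^{\otimes k} \cong \bigoplus_\lambda \ms_k^\lambda\, \GG^\lambda$ and tensoring once more with $\VV$:
\[
\ms_{k+1}^\nu \;=\; \sum_\lambda \ms_k^\lambda \, a_{\lambda, \nu} \;=\; (\Ar^{k+1})_{\mathbf{0}, \nu}.
\]
Summing against $t^k$ yields $\ms^\mu(t) = \bigl((\Ir - t\Ar)^{-1}\bigr)_{\mathbf{0}, \mu}$ as a formal power series.

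Next, I would extract this entry of the inverse via Cramer's rule. In the setting of interest, the adjacency matrix $\Ar$ is symmetric, because $\VV$ is self-dual (e.g.\ all irreducibles of $\ZZ_2^n$ are self-dual), forcing $a_{\mu, \lambda} = a_{\lambda, \mu}$. Consequently $(\Ar^k)_{\mathbf{0}, \mu} = (\Ar^k)_{\mu, \mathbf{0}}$, and the vector $v(t) := (\Ir - t\Ar)^{-1} \underline{\delta}$ has $\mu$-coordinate equal to $\ms^\mu(t)$. Applying Cramer's rule to the linear system $(\Ir - t\Ar)\, v(t) = \underline{\delta}$ then yields directly
\[
\ms^\mu(t) \;=\; \frac{\det(\mathrm{M}^\mu)}{\det(\Ir - t\Ar)}.
\]

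Finally, I would factor the denominator via the character table. Let $X$ be the matrix with rows indexed by irreducibles $\GG^\lambda$ and columns by conjugacy class representatives $g \in \Gamma$, so $X_{\lambda, g} = \chi_\lambda(g)$. The identity $\chi_\mu \chi_\VV = \sum_\lambda a_{\mu, \lambda} \chi_\lambda$ evaluated at each $g$ reads
\[
(\Ar X)_{\mu, g} \;=\; \chi_\mu(g)\chi_\VV(g) \;=\; (X D)_{\mu, g},
\]
where $D = \diag\bigl(\chi_\VV(g)\bigr)_{g \in \Gamma}$. Hence $\Ar X = X D$; since $X$ is invertible by the second orthogonality relation for characters, $\Ar$ is conjugate to $D$, and
\[
\det(\Ir - t\Ar) \;=\; \det(\Ir - tD) \;=\; \prod_{g \in \Gamma}\bigl(1 - \chi_\VV(g)\,t\bigr),
\]
which is the stated factorization. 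The one place where care is needed is aligning the Cramer-rule convention (\emph{column} $\mu$ replaced by $\underline{\delta}$) with the direction of walks: self-duality of $\Ar$ makes this essentially cosmetic in the present paper, so beyond that the whole argument is routine bookkeeping.
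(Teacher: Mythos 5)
The paper does not prove this theorem---it is quoted from [B, Thm.~2.1]---so there is no internal proof to compare against; your argument is the standard one and is essentially correct. The three steps (walks as entries of $\Ar^k$, the geometric series $(\Ir-t\Ar)^{-1}$ extracted by Cramer's rule, and the diagonalization $\Ar X = XD$ by the character table) are exactly what the paper implicitly relies on: the character-table step is the argument it cites again after Theorem \ref{T:expgen} (``follows as in \eqref{eq:main}, compare [St, Sec.~1]''). The one delicate point is the one you flag: with the row convention $(\Ar)_{\mu,\lambda}=a_{\mu,\lambda}$, Cramer's rule with \emph{column} $\mu$ replaced by $\underline{\delta}$ produces $\bigl((\Ir-t\Ar)^{-1}\bigr)_{\mu,\mathbf 0}$, i.e.\ the series for walks from $\mu$ to $\mathbf 0$, and this agrees with $\ms^\mu(t)$ only when $\Ar$ is symmetric; for a non-self-dual $\VV$ (e.g.\ $\GG=\ZZ_3$ acting by a primitive character, where the representation graph is a directed $3$-cycle) the two entries genuinely differ, so the general statement must be read with the transposed indexing of $\Ar$ or with a row rather than a column replaced. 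Since this paper applies the theorem only to $\ZZ_2^n$, where all irreducibles are self-dual and $a_{\mu,\lambda}=a_{\lambda,\mu}$, your resolution via symmetry is adequate for everything that follows.
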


\begin{remark}  The space of $\GG$-invariants in $\VV^{\ot k}$ is the sum of the
copies of the trivial $\GG$-module $\GG^{\mathbf 0}$ in $\VV^{\ot k}$.   Hence, the dimension
of the space of $\GG$-invariants in $\VV^{\ot k}$ is  $\ms_k^{\mathbf 0}$, and the Poincar\'e series
$\ms^{\mathbf 0}(t)$ is the generating function for those dimensions.   Under the assumptions of
Theorem \ref{T:Poin}, it follows that
\begin{equation}\label{eq:inv}\ms^{\mathbf 0}(t) =  \frac{\det \left(\mathrm{I} - t \angstrom \right)}{\det \left(\mathrm{I} - t \mathrm{A}\right)}
=  \frac{\det \left(\mathrm{I} - t  \angstrom \right)}{\prod_{g \in \Gamma} \left(1- \chi_\VV(g)t\right)},
\end{equation}
where $\mathrm{A}$ is the adjacency matrix of the representation graph $\mathcal{R}_{\VV}(\GG)$
and  \ $\angstrom$ is the adjacency matrix of the graph obtained from $\mathcal{R}_{\VV}(\GG)$
 by removing the node ${\mathbf 0}$ and all its incident edges.  \end{remark}

Consideration of the minimum polynomial of the adjacency matrix of a graph with finitely many vertices leads to the next result.
\begin{prop}\label{P:recur}  Suppose
$$p(t) = t^d + p_{d-1} t^{d-1} + \cdots + p_1 t + p_0 \in \mathbb C[t]$$
is the minimum polynomial of the adjacency matrix $\mathrm A$ of a finite graph $\mathcal G$,
and   $\ms^\mu_{\nu,k}$ is the number of walks on $\mathcal G$ of $k$ steps  from a node $\nu$
to a node $\mu$.   
\begin{itemize}
\item[{\rm (i)}] The following recursion relation holds for all $k \geq 0$:
\begin{equation}\label{eq:recursion} \ms_{\nu,k+d}^\mu + p_{d-1} \ms_{\nu,k+d-1}^\mu + \cdots + p_1 \ms_{\nu,k+1}^\mu + p_0 \ms_{\nu,k}^\mu = 0.\end{equation}
\item[{\rm (ii)}]  The corresponding exponential generating function,  $\gr^\mu_\nu(t) = \displaystyle{ \sum_{k \geq 0}\ms_{\nu,k}^\mu
 \frac{t^k}{k\,!}}$,
 satisfies the differential equation
\begin{equation}\label{eq:diffeq} y^{(d)} + p_{d-1}y^{(d-1)} + \cdots p_1 y^{(1)} + p_0 y = 0.\end{equation}
\end{itemize}
\end{prop}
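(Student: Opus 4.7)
The plan is to identify the walk count $\ms^\mu_{\nu,k}$ with a matrix entry of a power of $\Ar$ and then exploit the defining property $p(\Ar)=0$ of the minimum polynomial.

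First I would recall the standard combinatorial fact that for the adjacency matrix $\Ar$ of a finite graph $\mathcal G$, the $(\mu,\nu)$-entry of $\Ar^k$ counts the number of walks of length $k$ from $\nu$ to $\mu$; that is, $\ms^\mu_{\nu,k} = (\Ar^k)_{\mu,\nu}$. This is immediate by induction on $k$, since each step of a walk contributes a factor from $\Ar$ and composing walks corresponds to matrix multiplication.

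For part (i), since $p(t)$ is the minimum polynomial of $\Ar$, we have $p(\Ar)=0$. Multiplying by $\Ar^k$ on either side gives
\[
\Ar^{k+d} + p_{d-1}\Ar^{k+d-1} + \cdots + p_1 \Ar^{k+1} + p_0 \Ar^{k} \;=\; 0
\]
for every $k\geq 0$. Reading off the $(\mu,\nu)$-entry of this matrix identity and using the walk interpretation above yields exactly the recursion \eqref{eq:recursion}.

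For part (ii), I would simply translate the recursion into the language of exponential generating functions. Since $\gr^\mu_\nu(t)=\sum_{k\geq 0}\ms^\mu_{\nu,k}\,t^k/k!$, termwise differentiation gives
\[
\bigl(\gr^\mu_\nu\bigr)^{(j)}(t) \;=\; \sum_{k\geq 0} \ms^\mu_{\nu,k+j}\,\frac{t^k}{k!}
\]
for each $j\geq 0$. Substituting into $y^{(d)}+p_{d-1}y^{(d-1)}+\cdots+p_0 y$ and collecting coefficients of $t^k/k!$ produces precisely the left-hand side of \eqref{eq:recursion}, which vanishes by part (i). Hence $\gr^\mu_\nu$ solves the differential equation \eqref{eq:diffeq}.

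There is really no serious obstacle here; the only subtlety is remembering that a polynomial identity that holds for the matrix $\Ar$ automatically descends to each entry, so the same linear recursion with constant coefficients is inherited by every sequence of walk counts regardless of the endpoints $\mu$ and $\nu$. The EGF statement is then a formal consequence of the fact that differentiation shifts the indexing of an exponential generating function.
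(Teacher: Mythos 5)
Your proof is correct and follows the same route as the paper: multiply $p(\Ar)=0$ by $\Ar^k$, read off the relevant matrix entry to get the recursion, and translate the recursion into the differential equation via the index-shifting property of derivatives of exponential generating functions. The only difference is cosmetic (you spell out the EGF step that the paper leaves implicit, and you index the matrix entry as $(\mu,\nu)$ rather than $(\nu,\mu)$, which is immaterial since $\Ar$ is symmetric).
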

\begin{proof}
If $p(t)$ is as above, then
\begin{equation}\label{eq:kmin}  \Ar^{k+d} + p_{d-1}\Ar^{k+d-1} + \cdots + p_1 \Ar^{k+1} + p_0 \Ar^k = \Ar^k p(\Ar) = 0
\end{equation}
for all $k \geq 0$.    Since $(\Ar^\ell)_{\nu, \mu} = \ms_{\nu,\ell}^\mu$ for all $\ell \geq 0$,
taking the $(\nu,\mu)$ entry of \eqref{eq:kmin}  gives the desired result in \eqref{eq:recursion}.
It follows from  \eqref{eq:recursion} that  $\gr_\nu^\mu(t)$ satisfies
  \eqref{eq:diffeq},  where $y^{(r)} = \left(\frac{d}{dt}\right)^r(y)$ for all $r \geq 0$.     \end{proof}
  \smallskip
 
\begin{thm}\label{T:expgen}  \ Let $\GG$ be a finite group with irreducible modules $\GG^\lam$, $\lambda \in \Lambda(\GG)$,  over $\CC$, and let $\VV$ be a faithful finite-dimensional $\GG$-module.
Assume $\gr^\mu(t) = \displaystyle{ \sum_{k \geq 0}\ms_k^\mu\, \frac{t^k}{k\,!}}$ is the exponential generating function for the multiplicity $\ms_k^\mu$  of
$\GG^\mu$ in  $\VV^{\ot k}$ for $k \geq 0$ (equivalently for the number of walks of $k$ steps on $\mathcal R_\VV(\GG)$ from
$\mathbf 0$ to $\mu$).  Then
\begin{itemize}
\item[{\rm (i)}]  $\gr^\mu(t)$ satisfies the differential equation
$y^{(d)} + p_{d-1}y^{(d-1)} + \cdots + p_1 y^{(1)} + p_0 y = 0.$
where
$p(t) = t^d + p_{d-1} t^{d-1} + \cdots + p_1 t + p_0$ is the minimum polynomial of the adjacency matrix
$\mathrm{A}$  of the
representation graph $\mathcal R_{\VV}(\GG)$.
\item[{\rm (ii)}]  The roots of $p(t)$ are  the distinct character values
in $\{\chi_{\VV}(g)  \mid g \in \Gamma\}$,  where $\Gamma$ is a set of conjugacy class representatives
of $\GG$.   When  $p(t)$ has distinct roots  $\xi_j$, $j=1,\dots, d$,
then $\gr^\mu(t)$ is a linear combination of the exponential functions $\er^{\xi_j t}$.
 \end{itemize}
\end{thm}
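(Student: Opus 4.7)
The plan is to handle the two parts of Theorem \ref{T:expgen} in sequence, both times leaning on material already established in the excerpt.

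For part (i), the argument is a direct specialization of Proposition \ref{P:recur} to the graph $\mathcal G = \mathcal R_\VV(\GG)$ with source node $\nu = \mathbf 0$. By the discussion preceding Theorem \ref{T:Poin}, $\ms_k^\mu$ is precisely the number of walks of $k$ steps from $\mathbf 0$ to $\mu$ on $\mathcal R_\VV(\GG)$, so $\gr^\mu(t)$ coincides with $\gr_{\mathbf 0}^\mu(t)$ in the notation of that proposition. The claimed differential equation is then Proposition \ref{P:recur}(ii) applied verbatim.

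For part (ii), the central step is to identify the spectrum of the adjacency matrix $\Ar$ with the set of character values $\{\chi_\VV(g) : g \in \Gamma\}$. The cleanest route is to compare the two expressions for the denominator of the Poincar\'e series in Theorem \ref{T:Poin}, which yields the polynomial identity
\[ \det(\Ir - t\Ar) \;=\; \prod_{g \in \Gamma}\bigl(1 - \chi_\VV(g) t\bigr). \]
Both sides have degree $|\Lambda(\GG)| = |\Gamma|$ in $t$, the left because $\Ar$ is $|\Lambda(\GG)| \times |\Lambda(\GG)|$ and the right because $\Gamma$ indexes the conjugacy classes, whose count equals that of the irreducibles. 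Combined with the standard factorization $\det(\Ir - t\Ar) = \prod_i (1 - \xi_i t)$ indexed by the eigenvalues $\xi_i$ of $\Ar$ with multiplicity, this pins down the multiset of eigenvalues of $\Ar$ as $\{\chi_\VV(g) : g \in \Gamma\}$. The roots of the minimum polynomial $p(t)$ are, by definition, the distinct eigenvalues of $\Ar$, so they coincide with the distinct character values, proving the first half of (ii).

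Given this identification, the remaining assertion reduces to standard constant-coefficient ODE theory: when $p(t)$ has $d$ distinct roots $\xi_1,\ldots,\xi_d$, the solution space of \eqref{eq:diffeq} is $d$-dimensional and spanned by $\er^{\xi_1 t},\ldots,\er^{\xi_d t}$, and $\gr^\mu(t)$ lies in that space by part (i). The only real obstacle is the spectral identification above; a conceptually cleaner alternative, worth noting in passing, is that tensoring with $\VV$ acts on the character ring of $\GG$ as multiplication by $\chi_\VV$, a map whose matrix is $\Ar$ in the basis of irreducible characters but is diagonal with entries $\chi_\VV(g)$ in the basis of class-indicator functions, so the spectral claim is at heart a change-of-basis statement.
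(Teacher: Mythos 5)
Your proposal is correct and follows essentially the same route as the paper: part (i) is read off from Proposition \ref{P:recur} applied to $\mathcal R_\VV(\GG)$ with $\nu=\mathbf 0$, and part (ii) comes from the identity $\det(\mathrm{I}-t\mathrm{A})=\prod_{g\in\Gamma}\left(1-\chi_\VV(g)t\right)$ of Theorem \ref{T:Poin} (equation \eqref{eq:main}), which is exactly the paper's one-line justification. Your added degree count and the closing change-of-basis remark are fine elaborations but do not change the underlying argument.
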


\begin{proof}  The assertion in (i)  is an  immediate consequence of  Proposition \ref{P:recur}.  That the roots of $p(t)$ are given by character
values follows as in \eqref{eq:main} (compare \cite[Sec.~1]{St}).    \end{proof}

In this work,  we focus on the abelian group  $\GG = \mathbb Z_2^n$,  where
 $\ZZ_2$ denotes the integers $\modd 2$.   This group
appears in many different settings including important ones in computing,  where the
elements $\arm =  (\arm_1, \dots, \arm_n)$, $\arm_i \in
\{0,1\}$ for all $i \in [1,n] := \{1,2, \dots, n\}$,  of $\ZZ_2^n$  are regarded as $n$ bits.
Here it is convenient to think of $\ZZ_2^n$ as
a multiplicative group and write  $\es^\arm$ rather than $\arm$,  so that the group operation is
given by $\es^\arm \es^{\br}  = \es^{\arm+\br}$, $\arm, \br \in \ZZ_2^n$, where the
sum $\arm+\br$ is componentwise addition in $\ZZ_2$.
Since $\ZZ_2^n$ is abelian,   the irreducible $\ZZ_2^n$-modules are all one-dimensional,
and we label them with the elements of $\ZZ_2^n$.     Thus, for $\br  \in \ZZ_2^n$,   let  $\XX^\br= \CC x_\br$,  where
$$ \es^\arm  x_\br  = (-1)^{\arm \cdot \br} x_\br,$$
and $\arm \cdot \br$ is the usual dot product.     Observe that $$\es^{\arm+\arm'}x_\br = (-1)^{(\arm+\arm')\cdot \br} x_\br = (-1)^{\arm \cdot \br} (-1)^{\arm' \cdot \br} x_\br
= \es^\arm(\es^{\arm'} x_\br),$$
so this does in fact define a $\ZZ_2^n$-module action on $\XX^\br$, and the corresponding
 character $\chi_\br$ of $\XX^{\br}$ is given by
 \begin{equation} \chi_\br(\arm) = \mathsf{tr}_{\XX^{\br}}(\es^{\arm}) = (-1)^{\arm \cdot \br}.
 \end{equation}
Moreover, since
$$ \er^\arm (x_\br \ot x_{\cm}) =  (-1)^{\arm \cdot \br} (-1)^{\arm \cdot \cm} x_\br \ot x_\cm = (-1)^{\arm \cdot (\br+\cm)}   x_\br \ot x_\cm,$$
we have that
\begin{equation}\label{eq:tens} \XX^\br \ot \XX^\cm \cong  \XX^{\br+\cm}\end{equation}
for all $\br, \cm \in \ZZ_2^n$.

For $i \in [1,n]$,  let $\ve_i$ denote the $n$-tuple in $\ZZ_2^n$ with $1$ as its $i$th component  and $0$ for all other components.
Set
$$\VV = \XX^{\ve_1} \oplus \XX^{\ve_2} \oplus \cdots \oplus \XX^{\ve_n}.$$
The nodes of the representation graph  $\mathcal R_{\VV}(\ZZ_2^n)$ are just the elements of $\ZZ_2^n$, hence,
are the  vertices  $\arm = (\arm_1, \dots, \arm_n)$,  \ $\arm_i \in \{0,1\}$ for all $i$,
of the $n$-cube (hypercube).  By \eqref{eq:tens},
$\XX^\arm\ot \VV = \sum_{i = 1}^n  \XX^{\arm + \ve_i},$ so that  tensoring $\XX^{\arm}$
with $\VV^{\ot k}$ amounts to taking  a walk of $k$ steps on the $n$-cube
starting from node $\arm$.

We apply Schur-Weyl duality results  to  relate walks  of $k$ steps on  the $n$-cube to the centralizer algebra
 $\ZsZ= \End_{\ZZ_2^n}(\VV^{\ot k})$  and  its irreducible modules $\Zs_k^{\arm}$.
That  connection enables us to give an expression for the  dimension of $\ZsZ$
 and for the dimension of  the module $\Zs_k^{\arm}$  for all $\arm \in \ZZ_2^n$.
 The group $\ZZ_2^n$ is a normal subgroup of the  reflection group
 $\GG(2,1,n) \cong \ZZ_2 \wr \Sr_n$ (the Weyl group of type $\mathrm{B}_n$).
 We identify $\VV$ with the irreducible $\GG(2,1,n)$-module on which elements
 of $\GG(2,1,n)$ act as $n \times n$ signed permutation matrices relative to the
 basis $x_i = x_{\ve_i}$, $i \in [1,n]$.      Tanabe \cite{T} has described a basis for the centralizer algebra $\ZsG = \End_{\GG(2,1,n)}(\VV^{\ot k})$  in terms of diagrams corresponding
 to set partitions.    Since  $\ZZ_2^n
 \subseteq \GG(2,1,n)$,
there is a reverse inclusion of centralizers
 $$\ZsG \subseteq  \ZsZ.$$
 We use that relationship  to index a basis for $\ZsZ$
 by labeled partition diagrams and to  give a formula  for $\dimm \ZsZ$ and for  $\dimm \Zs_k^{\arm}$ by counting
 certain  partitions.
Theorem \ref{T:Poin} can be used to obtain the Poincar\'e series (generating function)
for the dimensions of the irreducible modules
$\Zs_k^{\arm}$, $k \geq 0$ (hence, for the number of  walks of $k$ steps from $\mathbf{0} =(0,\dots,0)$ to $\arm$  on
the $n$-cube).   In the final section, we discuss these series
and also determine the exponential generating functions for these dimensions. The main result of that
section is Theorem \ref{T:expogen},  which says that for $\arm \in \ZZ_2^n$,
$$\gr^{\arm} (t) ={\sum_{k \geq 0}\ms_{k}^{\arm} \frac{t^{k}}{k!}} =\left( \cosh\,t\right)^{n-h}\left(\sinh\,t \right)^{h}$$
where $h = h(\arm)$, the Hamming weight (the number of ones) of $\arm$, and $\cosh\,t$ and $\sinh\,t$ are
hyperbolic cosine and sine.

 The group $\GG(2,1,n)$ also contains the symmetric group $\Sr_n$ as a subgroup,
 and there is  a reverse inclusion of centralizer algebras
 $\ZsG \subseteq  \ZsS,$ which has
 provided a number of interesting results and motivated the study of the
 corresponding Hecke algebras (see for example, \cite{A1,A2,AK,HR1}).

\begin{section}{Walks on graphs and on the $n$-cube} \end{section}
Assume $\Ar$ is the adjacency matrix of a finite graph $\mathcal G$  with undirected edges so that $\Ar$ is a real symmetric matrix.
Let $\mathcal V$ be the vertex set of $\mathcal G$.     Then $\Ar$ has real eigenvalues $\lambda_v, \ v \in \mathcal V$.
Let $\cE_v = (\cE_{u,v})_{u \in \mathcal V}$ be the orthonormal eigenvectors of $\Ar$  so that $\Ar \cE_v = \lambda_v \cE_v$,
and let $\cE = (\cE_{u,v})$ be the matrix whose $v$th column  is the column vector $\cE_v$.
Then the transpose  $\cE^{\tt t} = \cE^{-1}$,  and $\cE \cE^{\tt t} = \cE^{\tt t} \cE = \mathrm{I}_{|\mathcal V|}$.

The next results are from \cite[Chaps. 1,2]{S}. We include the
proofs,  in part  to establish our notation.     \medskip

\begin{prop}\label{P:graph}  The number of walks on the graph $\mathcal {G}$ of $k$-steps from node $v$ to node  $w$ is
$(\Ar^k)_{v,w} =  \sum_{u \in \mathcal V} \cE_{v,u} \cE_{w,u}  \lambda_u^k$.
\end{prop}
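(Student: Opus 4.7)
The plan is to prove the two halves of the claimed identity separately and then glue them. First I would verify the combinatorial identity $(\Ar^k)_{v,w} = \#\{\text{walks of } k \text{ steps from } v \text{ to } w\}$ by induction on $k$. The base case $k=0$ is the statement that $\Ar^0 = \mathrm{I}$ records the (unique, empty) walk from each node to itself. For the inductive step, I would write
\[
(\Ar^{k+1})_{v,w} = \sum_{u \in \mathcal V} (\Ar^k)_{v,u}\, \Ar_{u,w},
\]
and interpret the right-hand side as partitioning walks from $v$ to $w$ of length $k+1$ according to the penultimate vertex $u$; the factor $\Ar_{u,w}$ counts the edges from $u$ to $w$ (for a multigraph this is the edge multiplicity, and for a simple graph it is $0$ or $1$), and $(\Ar^k)_{v,u}$ counts by induction the $k$-step walks from $v$ to $u$. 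This handles the first equality.

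Next I would spectrally diagonalize. Because $\Ar$ is real symmetric, the spectral theorem supplies an orthonormal eigenbasis $\{\cE_u\}_{u \in \mathcal V}$ with $\Ar\cE_u = \lambda_u \cE_u$; assembling the eigenvectors as columns of $\cE$ gives $\cE^{\tt t}\cE = \cE\cE^{\tt t} = \mathrm{I}_{|\mathcal V|}$ and
\[
\Ar = \cE\, \diag(\lambda_u)\, \cE^{\tt t}.
\]
Raising to the $k$-th power and using orthogonality telescopes the product to
\[
\Ar^k = \cE\, \diag(\lambda_u^k)\, \cE^{\tt t}.
\]
Taking the $(v,w)$ entry yields
\[
(\Ar^k)_{v,w} = \sum_{u \in \mathcal V} \cE_{v,u}\, \lambda_u^k\, (\cE^{\tt t})_{u,w} = \sum_{u \in \mathcal V} \cE_{v,u}\, \cE_{w,u}\, \lambda_u^k,
\]
which is the asserted formula.

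Neither step is really difficult — the combinatorial identity is standard and the spectral computation is the usual diagonalization trick — so there is no serious obstacle. The only thing to be careful about is the convention: $\cE_{v,u}$ denotes the $v$-component of the $u$-th eigenvector $\cE_u$, so that $(\cE^{\tt t})_{u,w} = \cE_{w,u}$; matching this indexing with the statement of the proposition is what makes the two factors $\cE_{v,u}\cE_{w,u}$ appear, and is the only place where a reader could get confused. Once the notation is set, the proof is essentially a two-line calculation.
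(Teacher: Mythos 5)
Your proof is correct and the spectral computation $\Ar^k = \cE\,\diag(\lambda_u^k)\,\cE^{\tt t}$ followed by reading off the $(v,w)$ entry is exactly the argument the paper gives. The only difference is that you also supply the standard induction showing $(\Ar^k)_{v,w}$ counts $k$-step walks, a fact the paper takes as known; this is a harmless (and if anything welcome) addition.
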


\begin{proof}   We have \, $\cE^{-1} \Ar \cE  =  \diag\{\lambda_u\}_{u \in \mathcal V}$, \, so that
$\cE^{-1}\Ar^k \cE = (\cE^{-1} \Ar \cE)^k    =$   $\diag\{\lambda_u^k\}_{u \in \mathcal V}$.      Therefore,
$\Ar^k  =  \cE \, \diag\{\lambda_u^k\} \, \cE^{\tt {t}}$,    and \medskip

$\hspace{1.5cm}  (\Ar^k)_{v,w}  =  \sum_{u \in \mathcal{V}  } \,  \cE_{v,u} \lambda_u^k (\cE^{\tt {t}})_{u,w}
 =   \sum_{ u \in \mathcal {V}} \, \cE_{v,u} \cE_{w,u}\lambda_u^k. $  \end{proof}
 \medskip

We specialize now to the case that $\GG$ is the group $\ZZ_2^n$ and
set  $\mathbf 0 = (0,\ldots, 0)$.   For $\arm \in \ZZ_2^n$,  let $h(\arm)$ be the
{\it Hamming weight} of $\arm$, i.e., the number of ones in $\arm$.
We consider the case that the graph $\mathcal {G}$ is the  representation graph $\mathcal{R}_{\VV_{\mathsf{S}}}(\ZZ_2^n)$ obtained from  the $\ZZ_2^n$-module $\VV_{\mathsf S} = \bigoplus_{\mathrm{s} \in \mathsf{S}}  \XX^{\mathrm{s}}$,   where  $\mathsf{S}$   is a  nonempty  subset of $\ZZ_2^n$.    The next result
gives the eigenvalues and corresponding  eigenvectors of the adjacency matrix $\Ar_{\mathsf{S}}$  for   $\mathcal{R}_{\VV_{\mathsf{S}}}(\ZZ_2^n)$.
Note  that $\XX^\arm \ot \VV_{\mathsf{S}} = \bigoplus_{\mathrm{s} \in \mathsf{S}}  \XX^{\arm + \mathrm{s}}$.       For $\br \in \ZZ_2^n$,  we will write $\underline \br$
for  $2^n \times 1$ column vector with $1$ in the row corresponding to $\br$ and
0 for all its other components.  The argument in \cite[Chap.~2]{S} involves the discrete Radon transform
on $\ZZ_2^n$  (see also
\cite{DG1}), which we don't use here. 
 
\begin{prop}\label{P:cube}  Let $\mathsf{S}$ be a nonempty subset of $\ZZ_2^n$,  and let
$\Ar_{\mathsf{S}}$ be the adjacency matrix of the representation graph $\mathcal{R}_{\VV_{\mathsf{S}}}(\ZZ_2^n)$, where $\VV_{\mathsf S} = \bigoplus_{\mathrm{s} \in \mathsf{S}}  \XX^{\mathrm{s}}$.  Then the character values
$\chi_{{}_{\VV_{\mathsf{S}}}}(\arm): = \chi_{{}_{\VV_{\mathsf{S}}}}(\es^{\arm})  = \sum_{\mathrm{s} \in \mathsf{S}}  (-1)^{\arm \cdot \mathrm{s}}$
for $\arm \in \ZZ_2^n$ are the eigenvalues of $\Ar_{\mathsf{S}}$, and the vector   $\cE_\arm = \sum_{\br \in \ZZ_2^n}  (-1)^{\arm \cdot \br}  \underline \br$ is an eigenvector for $\Ar_{\mathsf {S}}$ corresponding to the eigenvalue $\chi_{{}_{\VV_{\mathsf{S}}}}(\arm)$.
The vectors $\cE_{\arm}, \arm \in \ZZ_2^n$, give a basis for $\CC^{2^n}$.
\end{prop}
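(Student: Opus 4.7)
The plan is a direct computation: verify that $\cE_\arm$ is an eigenvector of $\Ar_{\mathsf{S}}$ with eigenvalue $\chi_{\VV_{\mathsf{S}}}(\arm)$, and then use an orthogonality argument to conclude that the collection $\{\cE_\arm \mid \arm \in \ZZ_2^n\}$ is a basis.

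First I would unpack the adjacency matrix. By the tensor product identity \eqref{eq:tens}, $\XX^\br \ot \VV_{\mathsf{S}} = \bigoplus_{\mathrm{s} \in \mathsf{S}} \XX^{\br+\mathrm{s}}$, so the edges of $\mathcal{R}_{\VV_{\mathsf{S}}}(\ZZ_2^n)$ emanating from $\br$ go precisely to the nodes $\br + \mathrm{s}$, $\mathrm{s} \in \mathsf{S}$. Therefore $\Ar_{\mathsf{S}} \, \underline{\br} = \sum_{\mathrm{s} \in \mathsf{S}} \underline{\br + \mathrm{s}}$. Applying this to $\cE_\arm = \sum_{\br \in \ZZ_2^n} (-1)^{\arm \cdot \br}\,\underline{\br}$ and then reindexing the inner sum by $\br' = \br + \mathrm{s}$ (so that $\arm \cdot \br = \arm \cdot \br' + \arm \cdot \mathrm{s}$ in $\ZZ_2$), the computation
\[
\Ar_{\mathsf{S}}\, \cE_\arm \;=\; \sum_{\mathrm{s} \in \mathsf{S}} \sum_{\br \in \ZZ_2^n} (-1)^{\arm \cdot \br}\, \underline{\br + \mathrm{s}}
\;=\; \Bigl(\sum_{\mathrm{s} \in \mathsf{S}} (-1)^{\arm \cdot \mathrm{s}}\Bigr) \cE_\arm
\;=\; \chi_{{}_{\VV_{\mathsf{S}}}}(\arm)\, \cE_\arm
\]
drops out, confirming the claimed eigenvalue/eigenvector pair.

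Second, I would show the $\cE_\arm$ form a basis of $\CC^{2^n}$ by checking orthogonality under the standard Hermitian form on $\CC^{2^n}$. For $\arm, \arm' \in \ZZ_2^n$,
\[
\langle \cE_\arm, \cE_{\arm'} \rangle \;=\; \sum_{\br \in \ZZ_2^n} (-1)^{(\arm + \arm') \cdot \br}.
\]
If $\arm = \arm'$, every term equals $1$ and the sum is $2^n$; otherwise $\arm + \arm'$ has at least one nonzero coordinate, and factoring the sum over $\ZZ_2^n$ into $n$ coordinatewise sums makes one of them equal to $1 + (-1) = 0$, killing the total. Thus the $2^n$ vectors $\cE_\arm$ are pairwise orthogonal and nonzero, hence linearly independent, hence a basis.

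There is no real obstacle here beyond careful bookkeeping of the sign $(-1)^{\arm \cdot \br}$ under the index shift $\br \mapsto \br + \mathrm{s}$; the key point is just that the characters $\br \mapsto (-1)^{\arm \cdot \br}$ of the abelian group $\ZZ_2^n$ diagonalize any translation-invariant operator, and $\Ar_{\mathsf{S}}$ is exactly such an operator (its rows are translates of one another by elements of $\mathsf{S}$).
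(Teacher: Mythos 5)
Your proposal is correct and follows essentially the same route as the paper: the same reindexing computation $\br \mapsto \br + \mathrm{s}$ to extract the eigenvalue $\sum_{\mathrm{s} \in \mathsf{S}} (-1)^{\arm \cdot \mathrm{s}}$, and the same orthogonality sum $\sum_{\br} (-1)^{(\arm+\arm')\cdot \br}$ to get linear independence (the paper simply cites character orthogonality where you factor the sum coordinatewise, but the content is identical).
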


\begin{proof}   Observe that

\begin{align*} \hspace{1.8cm}  \Ar_{\mathsf{S}}  \cE_\arm  &= \sum_{\br \in \ZZ_2^n}  (-1)^{\arm \cdot \br}  \left( \sum_{\mathrm s \in \mathsf{S}}  \underline{\br + \mathrm{s}} \right) 
 = \sum_{\cm \in \ZZ_2^n}  \left( \sum_{\mathrm s \in \mathsf{S}}    (-1)^{\arm \cdot( \cm +\mathrm{s})}\right)
\underline{\cm}   \\
&=  \left(\sum_{\mathrm s \in \mathsf{S}} (-1)^{\arm \cdot \mathrm{s}}\right) \left(\sum_{\cm \in  \mathbb Z_2^n}   (-1)^{\arm \cdot \cm} \underline \cm \right) \\
&= \chi_{{}_{\VV_{\mathsf{S}}}}(\arm) \cE_\arm, \end{align*}
so that $\cE_{\arm}$ is an eigenvector corresponding to the eigenvalue  $\chi_{{}_{\VV_{\mathsf{S}}}}(\arm)$.

We view  $\cE_\arm$
as a column vector whose $\br$th component is   $(-1)^{\arm \cdot \br}$.    Taking the
inner product of two such column vectors gives

\begin{align}\label{eq:evorth}  \cE_{\arm} \cdot \cE_{\arm'} & =  \sum_{\br \in \ZZ_2^n}  (-1)^{\arm \cdot \br} (-1)^{\arm' \cdot \br} =   \sum_{\br \in \ZZ_2^n} (-1)^{(\arm + \arm')\cdot \br} \\
& = \begin{cases}  0   & \text{if   $\arm \neq \arm'$    (equivalently, if  $\arm +\arm' \neq {\mathbf 0}$), }\\
2^n  & \text{if   $\arm =  \arm' $    (equivalently, if \ $\arm +\arm' = \mathbf 0$), }
\end{cases} \nonumber
\end{align}
which is just a statement of the well-known  orthogonality of the characters $\chi_{\arm}$ and $\chi_{\arm'}$
for $\arm \neq \arm'$.   Thus, the eigenvectors $\cE_{\arm}$ are orthogonal, hence linearly independent,  and there are $2^n$
of them, so they determine a basis for $\CC^{2^n}$.  \end{proof}

In the special case $\mathsf{S} = \{\ve_i \mid i \in [1,n]\}$, we have
$\VV_{\mathsf{S}} = \VV$,  and  the  representation graph is just the $n$-cube. In this case,
Proposition \ref{P:cube}  gives

\begin{cor}\label{C:cube1}  Let $\Ar$ be the adjacency matrix of the $n$-cube.      Then $\Ar$ has eigenvalues $\chi_\VV(\arm) =\sum_{i=1}^n (-1)^{\arm \cdot \ve_i}  = \sum_{i=1}^n (-1)^{\arm_i} = n-2h(\arm)$ for $\arm = (\arm_1,\dots, \arm_n)  \in  \ZZ_{2}^n$, where $h(\arm)$ is the {\it Hamming weight} of $\arm$,  and  $\cE_\arm = \sum_{\br \in \ZZ_2^n}  (-1)^{\arm \cdot \br}  \underline \br$ is an eigenvector for $\Ar$ corresponding to the eigenvalue $\chi_\VV(\arm)$.  Thus, the eigenvalues of $\Ar$ are $n-2h$  for $h = 0,1,\dots, n$ and $n-2h$ occurs
with multiplicity $\binom{n}{h}$, and the eigenvectors $\cE_{\arm}$, $\arm \in \ZZ_2^n$,  form a basis for $\mathbb C^{2^n}$.
\end{cor}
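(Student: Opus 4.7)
The plan is to deduce this corollary directly from Proposition \ref{P:cube} by specializing to $\mathsf{S} = \{\ve_i \mid i \in [1,n]\}$, since with this choice $\VV_{\mathsf{S}} = \VV = \bigoplus_{i=1}^n \XX^{\ve_i}$ and the representation graph $\mathcal{R}_{\VV}(\ZZ_2^n)$ is precisely the $n$-cube (its vertices are the elements of $\ZZ_2^n$, and $\XX^\arm \ot \VV = \bigoplus_{i=1}^n \XX^{\arm + \ve_i}$, so $\arm$ is adjacent to exactly the $n$ vertices $\arm + \ve_i$, which differ from $\arm$ in a single coordinate).

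First I would invoke Proposition \ref{P:cube} to conclude that the eigenvalues of $\Ar$ are the character values $\chi_{\VV}(\arm)$ for $\arm \in \ZZ_2^n$, with eigenvectors $\cE_\arm = \sum_{\br \in \ZZ_2^n} (-1)^{\arm \cdot \br}\,\underline \br$, and that these $2^n$ vectors form a basis for $\CC^{2^n}$. The only remaining task is then a short bookkeeping calculation: expanding $\chi_{\VV}(\arm) = \sum_{i=1}^n (-1)^{\arm \cdot \ve_i}$, using $\arm \cdot \ve_i = \arm_i$, gives $\sum_{i=1}^n (-1)^{\arm_i}$, and separating contributions of zeros and ones among the components of $\arm$ yields $(n-h(\arm)) - h(\arm) = n - 2h(\arm)$.

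For the multiplicity statement, I would observe that $\chi_\VV(\arm)$ depends on $\arm$ only through $h(\arm)$, so the eigenvalue $n-2h$ is attained precisely by those $\arm \in \ZZ_2^n$ with exactly $h$ ones, of which there are $\binom{n}{h}$. There is no real obstacle here; the corollary is essentially a direct specialization together with the elementary character computation, so the only thing to be careful about is correctly identifying the $n$-cube as $\mathcal{R}_\VV(\ZZ_2^n)$ and pointing out that the orthogonality statement \eqref{eq:evorth} in the proof of Proposition \ref{P:cube} gives the basis property without further work.
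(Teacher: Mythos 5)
Your proposal is correct and matches the paper's treatment: the corollary is presented there as the direct specialization of Proposition \ref{P:cube} to $\mathsf{S} = \{\ve_i \mid i \in [1,n]\}$, with the same elementary computation $\chi_\VV(\arm) = \sum_{i=1}^n (-1)^{\arm_i} = n-2h(\arm)$ and the same count of $\binom{n}{h}$ elements of each Hamming weight. Nothing is missing.
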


Our next goal  is to show the following (compare \cite[Cor.~2.5]{S}).  \medskip

\begin{cor}\label{C:cube2} Let $\br, \cm \in \ZZ_2^n$ and suppose $h(\br + \cm) =h$  (i.e. $\br$ and $\cm$ disagree in exactly $h$ coordinates).
Then the number of walks from $\br$ to $\cm$  of $k$ steps on the $n$-cube is given by

$$
(\Ar^k)_{\br,\cm} = \frac{1}{2^n}  \sum_{i=0}^n \sum_{j=0}^h (-1)^j {\binom{h}{j}} {\binom{n-h}{i-j}} (n-2i)^k.
$$  \end{cor}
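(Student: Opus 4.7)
The plan is to combine Proposition \ref{P:graph} with the spectral data for the $n$-cube provided by Corollary \ref{C:cube1}, then reorganize the resulting sum by Hamming weight.

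First I would normalize the eigenvectors. Proposition \ref{P:cube} (applied in the case $\mathsf{S}=\{\ve_i\}$) shows that the $\cE_\arm$ are mutually orthogonal with $\cE_\arm\cdot\cE_\arm=2^n$, so the vectors $2^{-n/2}\cE_\arm$ form an orthonormal eigenbasis for $\Ar$. The $\br$-component of $2^{-n/2}\cE_\arm$ is $2^{-n/2}(-1)^{\arm\cdot\br}$, and the corresponding eigenvalue is $\chi_\VV(\arm)=n-2h(\arm)$ by Corollary \ref{C:cube1}. Substituting into Proposition \ref{P:graph} gives
\begin{equation*}
(\Ar^k)_{\br,\cm}=\frac{1}{2^n}\sum_{\arm\in\ZZ_2^n}(-1)^{\arm\cdot\br}(-1)^{\arm\cdot\cm}\bigl(n-2h(\arm)\bigr)^k =\frac{1}{2^n}\sum_{\arm\in\ZZ_2^n}(-1)^{\arm\cdot\dr}\bigl(n-2h(\arm)\bigr)^k,
\end{equation*}
where $\dr=\br+\cm\in\ZZ_2^n$ and $h(\dr)=h$.

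Next I would group the sum by $i:=h(\arm)$. Since the eigenvalue $n-2h(\arm)$ depends only on $i$, it remains to evaluate the inner sum $N_i:=\sum_{\arm:\,h(\arm)=i}(-1)^{\arm\cdot\dr}$. Let $S\subseteq[1,n]$ be the support of $\dr$, so $|S|=h$. For an $\arm$ of Hamming weight $i$, write $j$ for the number of $1$'s of $\arm$ that lie in positions indexed by $S$; then $\arm\cdot\dr\equiv j\pmod 2$, and there are exactly $\binom{h}{j}\binom{n-h}{i-j}$ such $\arm$ (choose the $j$ positions inside $S$ and the remaining $i-j$ positions outside $S$). Hence
\begin{equation*}
N_i=\sum_{j=0}^{h}(-1)^{j}\binom{h}{j}\binom{n-h}{i-j},
\end{equation*}
and summing $(n-2i)^k N_i$ over $i=0,\dots,n$ yields the claimed formula.

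There is no real obstacle: the spectral decomposition does all the work, and the only combinatorial step is the standard lattice-point count that produces the double binomial coefficient $\binom{h}{j}\binom{n-h}{i-j}$. The one cosmetic point to keep in mind is the factor $1/2^n$, which arises precisely from the normalization $|\cE_\arm|^2=2^n$ in \eqref{eq:evorth}; this matches the prefactor in the statement.
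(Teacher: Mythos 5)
Your proposal is correct and follows essentially the same route as the paper's proof: normalize the eigenvectors $\cE_\arm$ by $2^{-n/2}$, apply Proposition \ref{P:graph} with the eigenvalue data from Corollary \ref{C:cube1}, and then count the $\arm$ of Hamming weight $i$ having $j$ ones in the support of $\br+\cm$ to obtain $(-1)^j\binom{h}{j}\binom{n-h}{i-j}$. No gaps.
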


\begin{proof}   For $\arm \in \ZZ_2^n$, we see from the calculation in \eqref{eq:evorth} that  $\cE_\arm \cdot \cE_{\arm} = 2^n$.
Therefore,  to get the matrix $\cE$ in Proposition \ref{P:graph}, we need to divide $\cE_\arm$  by $2^{n/2}$.
Let $\cE$ be the $(2^n \times 2^n)$-matrix whose $\arm$th column is $2^{-n/2} \cE_\arm$.      Then by Proposition \ref{P:graph},  we have for the adjacency matrix $\Ar$  of the $n$-cube,

$$(\Ar^k)_{\br,\cm} =  2^{-n}\sum_{\arm \in \ZZ_2^n}\cE_{\br,\arm}\cE_{\cm,\arm} \lambda_\arm^k$$ where
$$\cE_{\br,\arm}\cE_{\cm,\arm}\lambda_\arm^k   = (-1)^{\arm \cdot \br} (-1)^{\arm \cdot \cm} \left(\sum_{i=1}^n (-1)^{\arm_i} \right)^k
= (-1)^{\arm \cdot (\br+\cm)} \left(n-2h(\arm)\right)^k,$$
and  $h(\arm)$ is the Hamming weight of $\arm$.

We count the number of $\arm \in \mathbb Z_2^n$ with Hamming weight $h(\arm) = i$ and with $\arm$ having   $j$  ones  in common with $\br+\cm$ for
$j = 0,1, \dots, h = h(\br+\cm)$.
We can choose $j$ ones in $\br+\cm$ that agree with $j$  ones in $\arm$ in ${\binom{h}{j}}$ ways.
The remaining $i-j$ ones in $\arm$ can be placed  in the remaining $n-h$ positions of $\arm$ in ${\binom{n-h}{i-j}}$ ways.
Since $\arm \cdot (\br +\cm) \equiv j \modd 2$,   we have the desired  result.     \end{proof}

\begin{section} {Consequences for the centralizer algebras $\ZsZ$}  \end{section}

 Recall that in  the $n$-cube case $\GG = \ZZ_2^n$, \  $\VV = \XX^{\ve_1} \oplus \cdots \oplus \XX^{\ve_n}$, and  the irreducible modules for $ \ZZ_2^n$ and for its centralizer algebras
 $\ZsZ= \End_{\ZZ_2^n}(\VV^{\ot k})$ are labeled  by elements $\arm \in \ZZ_2^n$.     Then
 Proposition \ref{P:cube} and Corollary \ref{C:cube2} imply    \medskip

\begin{cor}\label{C:cent}  \hspace{-.5cm} \begin{itemize}
\item[{\rm (i)}]  The dimension of the irreducible $\ZsZ$-module $\Zs_k^\arm$ labeled by $\arm \in \ZZ_2^n$ is given by
 $$\dimm \Zs_k^\arm  = (\Ar^k)_{\mathbf{0},\arm} =  \frac{1}{2^n}  \sum_{i=0}^n  \sum_{j=0}^h  (-1)^j {\binom{h}{j}} {\binom{n-h}{i-j}} (n-2i)^k,$$  where $h = h(\arm)$ is the Hamming weight of $\arm$.
 In particular, the irreducible $\ZsZ$-modules labeled by $\arm$ and $\br$ with $h(\arm) =h(\br)$ have the same dimension.
 \item[{\rm (ii)}]    $ \dimm \ZsZ =  \displaystyle{\frac{1}{2^n} \sum_{i=0}^n  {\binom{n}{i}}  (n-2i)^{2k}}$
 \end{itemize}
  \end{cor}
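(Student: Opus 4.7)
The proof assembles two strands already in place: the Schur-Weyl dictionary stated in the introduction and the explicit walk count from Corollary \ref{C:cube2}. The plan is first to translate both dimensions into numbers of walks on the $n$-cube, and then to specialize the closed-form formula for those walks.

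For part (i), I would begin by invoking Schur-Weyl duality as summarized in the bullet points of the introduction: the multiplicity $\ms_k^\arm$ of the trivial-weight walk count on $\mathcal{R}_\VV(\ZZ_2^n)$ equals $\dimm \Zs_k^\arm$. Since $\VV = \XX^{\ve_1} \oplus \cdots \oplus \XX^{\ve_n}$ makes $\mathcal{R}_\VV(\ZZ_2^n)$ the $n$-cube, $\ms_k^\arm$ is exactly the number of $k$-step walks from $\mathbf{0}$ to $\arm$, i.e.\ $(\Ar^k)_{\mathbf{0},\arm}$ for $\Ar$ the adjacency matrix of the $n$-cube. Applying Corollary \ref{C:cube2} with $\br = \mathbf{0}$ and $\cm = \arm$, so that $h(\br+\cm) = h(\arm) = h$, yields the claimed double sum. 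The final observation that $\dimm\Zs_k^\arm$ depends on $\arm$ only through $h(\arm)$ is immediate since the right-hand side involves $\arm$ solely via $h$.

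For part (ii), I would use the last bullet of the Schur-Weyl summary, which gives
\[
\dimm \ZsZ \;=\; \sum_{\arm \in \ZZ_2^n} (\ms_k^\arm)^2 \;=\; \ms_{2k}^{\mathbf{0}},
\]
so $\dimm \ZsZ$ equals the number of $2k$-step closed walks at $\mathbf{0}$ on the $n$-cube. I then apply Corollary \ref{C:cube2} with $\br = \cm = \mathbf{0}$, hence $h = 0$, and replace $k$ by $2k$. The inner sum over $j$ collapses to the single term $j = 0$ because $\binom{0}{j} = 0$ for $j \geq 1$, leaving $\binom{n}{i}(n-2i)^{2k}$, and dividing by $2^n$ produces the stated formula.

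There is no genuine obstacle: both parts are direct specializations. The only thing to check carefully is the bookkeeping of $\br$, $\cm$, and the resulting Hamming weight in each invocation of Corollary \ref{C:cube2}, and for part (ii) the degeneration of the double sum when $h = 0$.
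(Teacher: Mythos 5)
Your proposal is correct and follows essentially the same route as the paper: the corollary is presented there as an immediate consequence of Schur--Weyl duality together with Corollary \ref{C:cube2}, and the paper's own remark verifies part (ii) exactly as you do, via $\dimm \ZsZ = \dimm \Zs_{2k}^{\mathbf 0}$ with $h = h(\mathbf{0}) = 0$ collapsing the inner sum to the $j=0$ term. Nothing further is needed.
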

 \medskip
 \begin{remark}  Part  {\rm (ii)} is a special case of {\rm (i)}, since we know by Schur-Weyl duality that
\begin{align*}
\dimm \ZsZ & = \dimm \Zs_{2k}^\mathbf{0}  \\
& = \displaystyle{\frac{1}{2^n}  \sum_{i=0}^n  \sum_{j=0}^h  (-1)^j {\binom{h}{j}} {\binom{n-h}{i-j}} (n-2i)^{2k}}, \ \, \text{where $h = h(\mathbf{0}) = 0$,} \\
& =  \displaystyle{\frac{1}{2^n} \sum_{i=0}^n  {\binom{n}{i}}  (n-2i)^{2k}}. \end{align*}   \end{remark}

Next, we construct an explicit basis for $\ZsZ$.   Let $\{x_i =   x_{\ve_i} \mid i \in [1,n]\}$ be the basis  for
$\VV = \XX^{\ve_1} \oplus \cdots \oplus \XX^{\ve_n}$
such that  $\er^\arm x_i = (-1)^{\arm \cdot \ve_i}x_i = (-1)^{\arm_i}x_i$.
 Then for $\beta = (\beta_1, \dots, \beta_k) \in [1,n]^k$, set
$x_{\beta} = x_{\beta_1} \ot  \cdots \ot x_{\beta_k}$.      The elements
$x_{ \beta}, \  \beta \in [1,n]^k$,  form a basis for $\VV^{\ot k}$ with
$$\es^\arm x_{ \beta} = (-1)^{\arm \cdot \left(\ve_{\beta_1} + \cdots  + \ve_{\beta_k}\right)} x_{ \beta}.$$

Suppose $\Phi   \in \End(\VV^{\ot k})$,  and for $\alpha \in [1,n]^k$  assume
$$\Phi  x_{\alpha} = \sum_{\beta \in [1,n]^k}  \Phi_{\alpha}^{ \beta}  x_{ \beta},$$
where $\Phi_{\alpha}^{ \beta} \in \CC$ for $\alpha,\beta \in [1,n]^k$.
Then  for all $\arm  \in \mathbb Z_2^n$,
\begin{align*}  \es^\arm  \Phi  x_{\alpha} & = \sum_{\beta \in [1,n]^k} (-1)^{\arm \cdot \left(\ve_{\beta_1} + \cdots +\ve_{\beta_k}\right)}   \Phi_{\alpha}^{ \beta}  x_{ \beta}\\
\Phi \es^\arm x_{\alpha} &= (-1)^{\arm \cdot \left(\ve_{\alpha_1} + \cdots + \ve_{\alpha_k}\right)}\sum_{\beta \in [1,n]^k}  \Phi_{\alpha}^{ \beta}  x_{ \beta}.
\end{align*}
Thus, in order for $\Phi$ to belong to $\ZsZ$ we must have $\ve_{\alpha_1} + \cdots + \ve_{\alpha_k} =
\ve_{\beta_1} + \cdots +\ve_{\beta_k}$ for all $\alpha,\beta \in [1,n]^k$ such that $\Phi_{\alpha}^\beta \neq 0$.  Let  $\EE_{\alpha}^\beta \in \End(\VV^{\ot k})$  be given by
\begin{equation} \label{eq:Eab}
\EE_{\alpha}^\beta x_{\gamma} = \delta_{\alpha,\gamma} x_\beta \ \
\text{for all $\gamma \in [1,n]^k$},
\end{equation}
where $\delta_{\alpha,\gamma}$ is the Kronecker delta.
Since the  $ \EE_{\alpha}^\beta$   with  $\ve_{\alpha_1} + \cdots +\ve_{\alpha_k} =
\ve_{\beta_1} + \cdots +\ve_{\beta_k}$  clearly satisfy the requisite condition to belong to $\ZsZ$,  and they span $\ZsZ$, we have the following

\begin{thm}\label{T:base}   A basis for the centralizer algebra $\ZsZ$ is
$$\{ \EE_{\alpha}^\beta  \mid  \alpha,\beta \in [1,n]^k, \  \ve_{\alpha_1} + \cdots +\ve_{\alpha_k} =
\ve_{\beta_1} + \cdots + \ve_{\beta_k}\},$$  where
$\EE_{\alpha}^\beta$  is as in \eqref{eq:Eab}.  \end{thm}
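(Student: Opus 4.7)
The plan is to verify that the stated set is contained in $\ZsZ$, is linearly independent, and spans $\ZsZ$, exploiting the fact that the $\EE_\alpha^\beta$ already form a basis of the full endomorphism algebra $\End(\VV^{\ot k})$.

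First I would expand an arbitrary $\Phi \in \End(\VV^{\ot k})$ in the basis $\{\EE_\alpha^\beta : \alpha,\beta \in [1,n]^k\}$ of $\End(\VV^{\ot k})$, writing $\Phi = \sum_{\alpha,\beta} \Phi_\alpha^\beta \EE_\alpha^\beta$, and then compare the two expressions for $\es^\arm \Phi\, x_\alpha$ and $\Phi\, \es^\arm x_\alpha$ already computed in the text preceding the theorem. Because the $x_\beta$ form a basis of $\VV^{\ot k}$, equating coefficients shows that $\Phi \in \ZsZ$ if and only if for every $\arm \in \ZZ_2^n$ and every pair $(\alpha,\beta)$ with $\Phi_\alpha^\beta \neq 0$ one has
\[ (-1)^{\arm \cdot (\ve_{\alpha_1}+\cdots+\ve_{\alpha_k})} = (-1)^{\arm \cdot (\ve_{\beta_1}+\cdots+\ve_{\beta_k})}. \]

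The key step is then to observe that this equality of signs, required to hold for \emph{every} $\arm \in \ZZ_2^n$, forces the identity
\[ \ve_{\alpha_1}+\cdots+\ve_{\alpha_k} = \ve_{\beta_1}+\cdots+\ve_{\beta_k} \quad \text{in } \ZZ_2^n. \]
This uses the non-degeneracy of the pairing $\arm \cdot \br \mapsto (-1)^{\arm \cdot \br}$ on $\ZZ_2^n$: if $\br \neq \mathbf{0}$ then some $\arm$ has $\arm \cdot \br = 1$, so the only vector $\br$ for which $(-1)^{\arm\cdot \br} = 1$ for all $\arm$ is $\br = \mathbf{0}$. Applying this with $\br = (\ve_{\alpha_1}+\cdots+\ve_{\alpha_k}) + (\ve_{\beta_1}+\cdots+\ve_{\beta_k})$ gives the stated condition. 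Conversely, if the condition holds, then the displayed sign identity holds trivially, so $\EE_\alpha^\beta$ belongs to $\ZsZ$.

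Together these show that the $\EE_\alpha^\beta$ satisfying the $\ve$-sum condition span $\ZsZ$. Since they are a subset of the basis $\{\EE_\alpha^\beta\}$ of $\End(\VV^{\ot k})$, linear independence is automatic, and the basis claim follows. I do not expect any serious obstacle here; the only thing one must be careful about is invoking the non-degeneracy of the character pairing (equivalently, the orthogonality relations already used in \eqref{eq:evorth}) to upgrade the sign condition from ``holds for all $\arm$'' to the equality of the two $\ve$-sums in $\ZZ_2^n$.
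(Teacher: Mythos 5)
Your proposal is correct and follows essentially the same route as the paper: expand $\Phi$ in the $\EE_\alpha^\beta$ basis of $\End(\VV^{\ot k})$, compare $\es^\arm\Phi x_\alpha$ with $\Phi\es^\arm x_\alpha$, and conclude that the nonvanishing coefficients are exactly those with $\ve_{\alpha_1}+\cdots+\ve_{\alpha_k}=\ve_{\beta_1}+\cdots+\ve_{\beta_k}$. The only difference is that you make explicit the non-degeneracy of the pairing $(\arm,\br)\mapsto(-1)^{\arm\cdot\br}$, which the paper leaves implicit; this is a welcome clarification rather than a deviation.
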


\begin{section}{Partition diagrams} \end{section}

Let $\mathcal P(k,n)$ denote the set partitions of $[1,2k]$ into at most $n$ parts (blocks).  We view the elements of
$\mathcal P(k,n)$ diagrammatically and identify set partitions with their diagrams.
For example, the diagram below corresponds to the set partition
$\{1,4\}$, $\{2,6,8,9\}$, $\{3,10\}$, $\{5,7\}$ in $\mathcal P(5,n)$ for any $n \geq 4$.

\begin{equation}\label{eq:diag}
\vcenter{\hbox{%\vspace{1cm}
\begin{tikzpicture}[scale=1.1,line width=1pt]
\foreach \i in {0,...,5}
{ \path (\i,1) coordinate (T\i); \path (\i,0) coordinate (B\i); }
%%%%%%%
\filldraw[fill= gray!50,draw=gray!50,line width=4pt]  (T1) -- (T5) -- (B5) -- (B1) -- (T1);
\draw[black] (T1) .. controls +(.1,-.25) and +(-.1,-.25) .. (T3);
\draw[black] (T3) .. controls +(.1,-.25) and +(-.1,-.25) .. (T4);
\draw[black] (T1) .. controls +(0,-.25) and  +(0,.25) .. (B2);
\draw[black] (T5) .. controls +(.1,-.25) and +(0,.25) .. (B3) ;
\draw[black] (T2) .. controls +(0,-.25) and +(0,.25) .. (B5) ;
\draw[black] (B1) .. controls +(.1,.35) and +(-.1,.35) .. (B4) ;
\draw  (B1)  node[black,below=0.2cm]{$\boldsymbol{1}$};
\draw  (B2)  node[black,below=0.2cm]{$\boldsymbol{2}$};
\draw  (B3)  node[black,below=0.2cm]{$\boldsymbol{3}$};
\draw  (B4)  node[black,below=0.2cm]{$\boldsymbol{4}$};
\draw  (B5)  node[black,below=0.2cm]{$\boldsymbol{5}$};
\draw  (T1)  node[black,above=0.2cm]{$\boldsymbol{6}$};
\draw  (T2)  node[black,above=0.2cm]{$\boldsymbol{7}$};
\draw  (T3)  node[black,above=0.2cm]{$\boldsymbol{8}$};
\draw  (T4)  node[black,above=0.2cm]{$\boldsymbol{9}$};
\draw  (T5)  node[black,above=0.2cm]{$\boldsymbol{10}$};
\draw  (T0) node[black,below=0.35cm]{${d = }$};
\foreach \i in {1,...,5}
{ \fill (T\i) circle (2pt); \fill (B\i) circle (2pt); }
\end{tikzpicture}}}
\end{equation}
The way the edges are drawn is immaterial.  What matters is that nodes in the same
block are connected, and nodes in different blocks  are not.

For $d \in \mathcal P(k,n)$,  let
\begin{align*}
& \text{$\mathsf{B}_1$  be the block of $d$ containing 1}; \\
& \text{$\mathsf{B}_2$  be the block containing the smallest number not in $\mathsf{B}_1$};\\
& \quad  \vdots \\
&  \text{$\mathsf{B}_j$ be the block containing the smallest number not in $\mathsf{B}_1 \cup \mathsf{B}_2 \cup \cdots \cup \mathsf{B}_{j-1}$}.
\end{align*}

In the example above, we have ordered the
blocks in this fashion, so that $\mathsf{B}_1 = \{1,4\}$,  $\mathsf{B}_2 = \{2,6,8,9\}$,  $\mathsf{B}_3 = \{3,10\}$, and
$\mathsf{B}_4 = \{5,7\}$.

For $\ell \in [1,2k]$, set
\begin{align}
\qquad  & \zeta_\ell = j   \ \ \text{if  $\ell \in \mathsf{B}_j$, and let} \\
& \zeta_d = (\zeta_1,\dots, \zeta_k) \ \  \text{and }  \zeta_d' = (\zeta_{k+1}, \dots, \zeta_{2k}) \nonumber
\end{align}

In  our running example,
\begin{equation} \label{eq:xid}  {\vspace{-.8cm} \begin{tikzpicture}[scale=1.1,line width=1pt]
\foreach \i in {0,...,5}
{ \path (\i,1) coordinate (T\i); \path (\i,0) coordinate (B\i); }
%%%%%%%
\filldraw[fill= gray!50,draw=gray!50,line width=4pt]  (T1) -- (T5) -- (B5) -- (B1) -- (T1);
\draw[black] (T1) .. controls +(.1,-.25) and +(-.1,-.25) .. (T3);
\draw[black] (T3) .. controls +(.1,-.25) and +(-.1,-.25) .. (T4);
\draw[black] (T1) .. controls +(0,-.25) and  +(0,.25) .. (B2);
\draw[black] (T5) .. controls +(.1,-.25) and +(0,.25) .. (B3) ;
\draw[black] (T2) .. controls +(0,-.25) and +(0,.25) .. (B5) ;
\draw[black] (B1) .. controls +(.1,.35) and +(-.1,.35) .. (B4) ;
\draw  (B1)  node[black,below=0.2cm]{$\boldsymbol{1}$};
\draw  (B2)  node[black,below=0.2cm]{$\boldsymbol{2}$};
\draw  (B3)  node[black,below=0.2cm]{$\boldsymbol{3}$};
\draw  (B4)  node[black,below=0.2cm]{$\boldsymbol{1}$};
\draw  (B5)  node[black,below=0.2cm]{$\boldsymbol{4}$};
\draw  (T1)  node[black,above=0.2cm]{$\boldsymbol{2}$};
\draw  (T2)  node[black,above=0.2cm]{$\boldsymbol{4}$};
\draw  (T3)  node[black,above=0.2cm]{$\boldsymbol{2}$};
\draw  (T4)  node[black,above=0.2cm]{$\boldsymbol{2}$};
\draw  (T5)  node[black,above=0.2cm]{$\boldsymbol{3}$};
%\draw  (T0) node[black,below=0.35cm]{$\boldsymbol{d = }$};
\foreach \i in {1,...,5}
{ \fill (T\i) circle (2pt); \fill (B\i) circle (2pt); }
\end{tikzpicture}}
\end{equation}
so that $\zeta_d = (1,2,3,1,4)$ and $\zeta_d' = (2,4,2,2,3)$.   \bigskip

\begin{subsection}{Definition of $T_d$} \end{subsection}

As in the previous section, assume  $\alpha = (\alpha_1,\dots, \alpha_k) \in [1,n]^k$ and let  $x_\alpha = x_{\alpha_1} \ot \cdots \ot x_{\alpha_k} \in \VV^{\ot k}$, where $x_i = x_{\ve_i}$, $i = 1,\dots, n$.
We can regard $\VV$ as the $n$-dimensional permutation module for the symmetric group
$\Sr_n$, with $\sigma\,x_j  = x_{\sigma(j)}$ for all $j$.   This extends to a diagonal action
of $\Sr_n$ on $\VV^{\ot k}$ with  $\sigma \, x_\alpha = x_{\sigma(\alpha)} =x_{\sigma(\alpha_1)} \ot \cdots \ot x_{\sigma(\alpha_k)}.$

Suppose $T \in \End(\VV^{\ot k})$  and $T  = \sum_{\alpha, \beta \in [1,n]^k}  T_{\alpha}^{\beta}\, \EE_{\alpha}^\beta$,
where the transformations $\EE_{\alpha}^\beta$ are given by \eqref{eq:Eab} and the
$T_{\alpha}^\beta \in \CC$.  Then,
\begin{align}\label{eq:centcond} T \in \End_{\Sr_n}(\VV^{\ot k})   & \iff   \sigma T  = T \sigma  \ \ \text{for all
$\sigma \in \Sr_n$} \nonumber  \\
& \iff  \sum_{\beta \in [1,n]^k}  T_{\alpha}^{\beta} x_{\sigma(\beta)} = \sum_{\beta\in [1,n]^k}  T_{\sigma(\alpha)}^{\beta} x_{\beta}  \ \ \text{for all  $\alpha  \in [1,n]^k$} \nonumber  \\
& \iff   T_{\alpha}^{\beta}  =T_{\sigma(\alpha)}^{\sigma(\beta)}   \ \ \text{for all $\alpha, \beta  \in [1,n]^k$,  $\sigma \in \Sr_n$.}
\end{align}

Now let  $\alpha, \alpha'$ be two $k$-tuples in $[1,n]^k$, but assume $\alpha' = (\alpha_{k+1}, \dots, \alpha_{2k})$ so that the indices on the components of $\alpha'$
run from $k+1$ to $2k$ rather than from $1$ to $k$.  Thus, we can think of $\alpha$  as giving labels for the bottom row
of a partition diagram $d$  and $\alpha'$ as giving labels for the top row of $d$, as pictured below.
$$ {\begin{tikzpicture}[scale=1.1,line width=1pt]
\foreach \i in {0,...,5}
{ \path (\i,1) coordinate (T\i); \path (\i,0) coordinate (B\i); }
%%%%%%%
\filldraw[fill= gray!50,draw=gray!50,line width=4pt]  (T1) -- (T5) -- (B5) -- (B1) -- (T1);
\draw[black] (T1) .. controls +(.1,-.25) and +(-.1,-.25) .. (T3);
\draw[black] (T3) .. controls +(.1,-.25) and +(-.1,-.25) .. (T4);
\draw[black] (T1) .. controls +(0,-.25) and  +(0,.25) .. (B2);
\draw[black] (T5) .. controls +(.1,-.25) and +(0,.25) .. (B3) ;
\draw[black] (T2) .. controls +(0,-.25) and +(0,.25) .. (B5) ;
\draw[black] (B1) .. controls +(.1,.35) and +(-.1,.35) .. (B4) ;
\draw  (B1)  node[black,below=0.2cm]{$\boldsymbol{\al_1}$};
\draw  (B2)  node[black,below=0.2cm]{$\boldsymbol{\al_2}$};
\draw  (B3)  node[black,below=0.2cm]{$\boldsymbol{\al_3}$};
\draw  (B4)  node[black,below=0.2cm]{$\boldsymbol{\al_4}$};
\draw  (B5)  node[black,below=0.2cm]{$\boldsymbol{\al_5}$};
\draw  (T1)  node[black,above=0.2cm]{$\boldsymbol{\al_6}$};
\draw  (T2)  node[black,above=0.2cm]{$\boldsymbol{\al_7}$};
\draw  (T3)  node[black,above=0.2cm]{$\boldsymbol{\al_8}$};
\draw  (T4)  node[black,above=0.2cm]{$\boldsymbol{\al_9}$};
\draw  (T5)  node[black,above=0.2cm]{$\boldsymbol{\al_{10}}$};
%\draw  (T0) node[black,below=0.35cm]{$\boldsymbol{d = }$};
\foreach \i in {1,...,5}
{ \fill (T\i) circle (2pt); \fill (B\i) circle (2pt); }
\end{tikzpicture}}
$$

Let
\begin{equation}\label{eq:Tddef} T_d  = \sum_{\al, \al' \in [1,n]^k}   \left(T_d\right)_\al^{\al'}  \EE_{\al}^{\al'} \in \End(\VV^{\ot k}), \end{equation}
where
$$ (T_d)_\al^{\al'} = \begin{cases}
1 & \text{if $\al_i = \al_j$ iff $i,j$ are  in the same block in $d$  for  $i,j \in [1,2k]$,}  \\
0 & \text{otherwise}.
\end{cases} $$

In the example above,
$$ (T_d)_\al^{\al'} = \begin{cases} 1 &\text{iff  $\al_1 = \al_4$;  $\al_2 = \al_6 = \al_8 = \al_9$;  $\al_3 = \al_{10}$;  $\al_5 = \al_7$;}   \\
0 &\text{otherwise}.
\end{cases} $$

Observe that $T_d \in \ZsS =  \End_{\Sr_n}(\VV^{\ot k})$,  as $ \left(T_d\right)_\al^{\al'} = 1$ (resp. 0) exactly when $ \left(T_d\right)_{\sigma(\al)}^{\sigma(\al')}  = 1$
(resp. 0) for all $\sigma \in \Sr_n$, so that the condition in \eqref{eq:centcond} is satisfied.    In fact, the transformations $T_d$ as $d$ ranges over the diagrams in $\mathcal P(k,n)$ give
a basis for the centralizer algebra $\ZsS$ (see for example, [HR2]).

For $(\al,  \al')$ and $(\beta,\beta')$, write $(\al, \al') \sim_{\Sr_n} (\beta,\beta')$ if  $\al = \sigma(\beta)$ and $\al'= \sigma(\beta')$ for
some $\sigma \in \Sr_n$.     Then

\begin{equation}\label{eq:Td} T_d  =  \sum_{(\al, \al')\,\sim_{\Sr_n}\,(\zeta_d, \zeta_d')}  \EE_{\al}^{\al'} \\
\end{equation}
and
\begin{equation}\label{eq:Sncent}  \dimm \ZsS = | \mathcal P(k,n)| =
\sum_{j = 1}^{n}  \binombr{2k}{j}
\end{equation}
where $\binombr{2k}{j} $  is the Stirling number
of the 2nd kind,  which counts the number
of ways to partition $2k$ objects into $j$ nonempty blocks.  The Stirling number
 $\binombr{2k}{j} = 0$ whenever $j > 2k$.

Next we describe \, $\ZsG = \End_{\GG(2,1,n)}(\VV^{\ot k})$, \,
where  $\GG(2,1,n) = \ZZ_2 \wr \Sr_n$  (the Weyl group of type $\mathrm B_n$).
 Note that $\GG(2,1,n)$ acts on $\VV$  so that relative to the basis $\{x_i \mid i\in [1,n]\}$,  each element of $\GG(2,1,n)$
acts by a permutation matrix with entries $\pm 1$.  The inclusion  $\Sr_n \subset \GG(2,1,n)$ implies
the reverse
inclusion of centralizer algebras,
$\ZsG \subset  \ZsS$.    In \cite{T}, Tanabe investigated the centralizer algebras of the complex reflection groups $\GG(m,p,n)$ acting
on $\VV^{\ot k}$.      Applying Tanabe's results to the particular case  of $\GG(2,1,n)$,  we have

\medskip

\begin{prop}\label{P:tanabe}  Let $\mathcal P_{\mathsf{even}}(k,n)$ be the set of
partitions of $[1,2k]$ into blocks of even size such that there are at most $n$ blocks.
Then $\{T_d  \mid  d \in \mathcal P_{\mathsf{even}}(k,n)\}$ is a basis for $\ZsG$,
where  $T_d \in \End(\VV^{\ot k})$ is as in \eqref{eq:Tddef} (or equivalently, as in \eqref{eq:Td}).
\end{prop}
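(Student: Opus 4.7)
The plan is to identify $\ZsG$ as the intersection $\ZsS \cap \ZsZ$ and then to pick out of the $\ZsS$-basis $\{T_d\mid d\in \mathcal P(k,n)\}$ exactly those $T_d$ that also lie in $\ZsZ$. The intersection description holds because $\GG(2,1,n)$ is generated, as a group, by its subgroups $\Sr_n$ and $\ZZ_2^n$ (which act on $\VV$ by permutation matrices and by diagonal sign matrices respectively), so an endomorphism commutes with $\GG(2,1,n)$ if and only if it commutes with both of these subgroups.

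Next I would use the expansion $T_d = \sum_{(\alpha,\alpha')\sim_{\Sr_n}(\zeta_d,\zeta_d')}\EE_\alpha^{\alpha'}$ from \eqref{eq:Td}. Since each $(\alpha,\alpha')\in [1,n]^{2k}$ lies in the $\Sr_n$-orbit of $(\zeta_d,\zeta_d')$ for a unique $d\in \mathcal P(k,n)$, the coefficient of $\EE_\alpha^{\alpha'}$ in $\sum_d c_d T_d$ is exactly $c_d$ for that $d$, and in particular no cancellation between different $d$'s can occur. By Theorem \ref{T:base}, it then follows that $\sum_d c_d T_d \in \ZsZ$ if and only if, for every $d$ with $c_d\neq 0$ and every $(\alpha,\alpha')$ in the $\Sr_n$-orbit of $(\zeta_d,\zeta_d')$, the identity
$$\ve_{\alpha_1}+\cdots+\ve_{\alpha_k} \; = \; \ve_{\alpha_{k+1}}+\cdots+\ve_{\alpha_{2k}}$$
holds in $\ZZ_2^n$. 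The condition is $\Sr_n$-equivariant (applying $\sigma\in \Sr_n$ simply permutes the standard basis vectors in the sum), so testing it on the orbit representative $(\zeta_d,\zeta_d')$ suffices.

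Finally I would translate this parity condition into a combinatorial property of $d$. In the canonical labeling $\zeta_d$, every element of block $\mathsf B_j$ is labeled $j$, and distinct blocks receive distinct labels in $[1,n]$, so regrouping by blocks gives
$$\sum_{i=1}^{2k}\ve_{\zeta_i} \; = \; \sum_{j} |\mathsf B_j|\,\ve_j \in \ZZ_2^n.$$
Since the $\ve_j$ indexed by the (at most $n$) blocks are distinct standard basis vectors of $\ZZ_2^n$, this sum vanishes precisely when $|\mathsf B_j|$ is even for every block, that is, when $d \in \mathcal P_{\mathsf{even}}(k,n)$. Combined with linear independence inherited from the $\ZsS$-basis, this proves the proposition.

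The main obstacle, conceptually rather than computationally, is the orbit/coefficient bookkeeping in the middle step: one must be sure that $\ZsZ$-membership for a linear combination $\sum_d c_d T_d$ really forces the condition to hold \emph{separately} for each $d$, and that $\Sr_n$-invariance of the condition legitimately reduces checking to the single representative $(\zeta_d,\zeta_d')$. Both follow cleanly from the bijection between $\Sr_n$-orbits on $[1,n]^{2k}$ and diagrams in $\mathcal P(k,n)$, but this is the point where the partition-diagram setup of the previous section must be invoked most carefully.
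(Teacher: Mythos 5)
Your proof is correct, but it takes a genuinely different route from the paper. The paper's proof is essentially a citation: it invokes Tanabe's Lemma~2.1, which directly asserts that a basis for $\ZsG$ consists of those $T_d$ with ($\#j$ in $\zeta_d$) $\equiv$ ($\#j$ in $\zeta_d'$) $\modd 2$ for each $j$, and then observes that this parity condition on each row separately is equivalent to every block of $d$ having even total size. You instead rederive the statement from scratch: since $\GG(2,1,n)$ is generated by $\Sr_n$ and $\ZZ_2^n$, you write $\ZsG = \ZsS \cap \ZsZ$, expand an arbitrary element of $\ZsS$ in the basis $\{T_d\}$, and use the disjointness of the supports of the $T_d$ (each pair $(\alpha,\alpha')$ induces a unique set partition, so no cancellation across diagrams is possible) together with Theorem~\ref{T:base} to conclude that membership in $\ZsZ$ is decided diagram by diagram, and on each orbit it suffices to test the representative $(\zeta_d,\zeta_d')$ by $\Sr_n$-equivariance of the condition $\sum_i\ve_{\alpha_i}=\sum_i\ve_{\alpha_{k+i}}$. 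The block-by-block regrouping $\sum_{i=1}^{2k}\ve_{\zeta_i}=\sum_j|\mathsf B_j|\,\ve_j$ then gives exactly the even-block criterion. Your argument is more self-contained in that it replaces the appeal to Tanabe's lemma with the paper's own Theorem~\ref{T:base}, at the price of still relying on the fact (quoted from [HR2] in the preceding section) that $\{T_d \mid d\in\mathcal P(k,n)\}$ spans $\ZsS$ --- that spanning statement is the one genuinely external input you cannot avoid, and you should flag it explicitly. The orbit/coefficient bookkeeping you worried about is handled correctly: the fibers of the map $(\alpha,\alpha')\mapsto d$ are precisely the $\Sr_n$-orbits, so the coefficient of $\EE_\alpha^{\alpha'}$ in $\sum_d c_dT_d$ is indeed the single scalar $c_d$ for the induced diagram $d$.
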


\begin{proof}     By \cite[Lem. 2.1]{T},   a basis for  $\ZsG$
consists of the transformations $T_d$ such that $d \in \mathcal P(k,n)$ and
\begin{equation}\label{eq:Tan}
\text{($\#j$ in $\zeta_d$) $\equiv$ ($\#j$ in $\zeta_d'$) $\modd 2$ for each $j \in [1,n]$}.
\end{equation}
This condition is equivalent to saying that  the blocks of $d$ are of even size.   \end{proof}

For the example in \eqref{eq:xid},  $\zeta_d = (1,2,3,1,4)$ and $\zeta_d' = (2,4,2,2,3)$, so  that  ($\#1$ in $\zeta_d)  = 2  \equiv 0  = (\#1$  in  $\zeta_d'$);
($\#2$ in $\zeta_d ) = 1  \equiv 3  = ( \#2$  in  $\zeta_d'$); \  ($\#3$ in $\zeta_d)  = 1  \equiv 1  = (\#3$  in  $\zeta_d'$); and
($\#4$ in $\zeta_d ) = 1  \equiv 1  = (\#4$  in  $\zeta_d'$);  and
($\#j$ in $\zeta_d ) = 0  \equiv 0  = (\#j$  in  $\zeta_d'$) for all $j \in [5,n]$.    There are 4 blocks in $d$,
and they have sizes $2,4,2,2$.   Thus,  $d$ satisfies condition \eqref{eq:Tan},
and $T_d$ is a basis element of $\Zs_5\big(\GG(2,1,n)\big)$.  \medskip

\begin{subsection} {$\dimm \ZsG$ and
$\dimm \ZsZ$} \end{subsection}

Let  $\Tr(k,r)$ be the number of partitions of a set of size $2k$ into $r$ nonempty blocks of
even size. In particular, $\Tr(k,r) = 0$ if $r > k$,  and $\Tr(k,1) = 1$.
These numbers correspond to  sequence A156289 in the Online Encyclopedia of Integer Sequences \cite{OEIS},
and are known to satisfy

\begin{equation}\label{eq:Tkr1}  \Tr(k,r) = \frac{1}{r!\,2^{r-1}}  \sum_{j=1}^r (-1)^{r-j} {\binom{2r}{r-j}} j^{2k}\end{equation}
In particular, $\Tr(4,2) = \frac{1}{4}\left( (-1)^{2-1}{\binom{4}{1}}1^8 + (-1)^0{\binom{4}{0}} 2^8\right) =
\frac{1}{4}(256-4) = 63$.
Each such set partition determines an integer solution to
\begin{equation}\label{eq:intsol}  \lam_1 +  \lam_2 + \cdots + \lam_r = k,    \qquad \lam_1 \geq \lam_2 \geq \dots \geq \lam_r > 0; \end{equation}
hence,  a partition $\lam = (\lam_1,\dots,\lam_r)$  of $k$ into $r$ nonzero parts.   Let
$\ell^\lam_j$ be the multiplicity of $j$ in the partition $\lam$.
Then  an alternate expression for $\Tr(k,r)$ is
\begin{align}\label{eq:Tkr}  \Tr(k,r) &=  \sum \, \frac{1}{\ell^\lam_1!\,\ell^\lam_2! \,\dots}\,\,
{\binom{2k}{2\lam_1 \  2\lam_2 \ \cdots \ 2 \lam_r}}  \quad \text{(multinomial notation)}  \nonumber \\
&=  \sum \, \frac{1}{\ell^\lam_1!\,\ell^\lam_2! \,\dots}\,\,{\binom{2k}{2\lam_1}} {\binom{2k-2\lam_1}{2\lam_2}} \ \cdots \ {\binom{2\lam_r}{2\lam_r}},
 \end{align}
where the sum is over all $\lam = (\lam_1,\dots,\lam_r)$ satisfying \eqref{eq:intsol}.
For example, when $k = 4$ and $r=2$,  there are two solutions $3+1 = 4$, $2 +2 = 4$ to \eqref{eq:intsol}
and
$$\Tr(4,2) = {\binom{8}{6}}{\binom{2}{2}} + \frac{1}{2!}{\binom{8}{4}}{\binom{4}{4}} = 28+35 = 63.$$
\medskip

The next result is an immediate consequence of  Proposition \ref{P:tanabe}.
\medskip
\begin{prop}\label{P:dim1}   $\dimm \ZsG = \sum_{r=1}^n  \Tr(k,r)$.
\end{prop}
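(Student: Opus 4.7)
The plan is to reduce the dimension count to a counting problem about even-block set partitions and then stratify by the number of blocks. By Proposition \ref{P:tanabe}, the set $\{T_d \mid d \in \mathcal P_{\mathsf{even}}(k,n)\}$ is a basis for $\Zs_k(\GG(2,1,n))$, and because the $T_d$ are indexed by distinct diagrams and so are linearly independent as basis elements, one has
\[
\dimm \ZsG \;=\; |\mathcal P_{\mathsf{even}}(k,n)|.
\]
Thus the entire problem is to count set partitions of $[1,2k]$ into blocks of even size, subject to the cap of at most $n$ blocks.

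Next I would stratify $\mathcal P_{\mathsf{even}}(k,n)$ by the number of blocks $r$. Writing $\mathcal P_{\mathsf{even}}(k,n) = \bigsqcup_{r=1}^{n}\mathcal P_{\mathsf{even}}^{(r)}(k,n)$, where $\mathcal P_{\mathsf{even}}^{(r)}(k,n)$ consists of those partitions with exactly $r$ (necessarily even-sized) blocks, the definition of $\Tr(k,r)$ gives $|\mathcal P_{\mathsf{even}}^{(r)}(k,r)| = \Tr(k,r)$ when no block-count cap is present. For $r \leq n$, imposing the cap does not exclude any of these partitions, so $|\mathcal P_{\mathsf{even}}^{(r)}(k,n)| = \Tr(k,r)$ in the range $1 \leq r \leq n$.

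Summing gives the desired identity $\dimm \ZsG = \sum_{r=1}^{n}\Tr(k,r)$. One should also observe that no partition of a $2k$-element set into even blocks can have more than $k$ blocks (since each block has size at least $2$), so $\Tr(k,r) = 0$ whenever $r > k$; this means the sum is automatically consistent with the cap whether $n \geq k$ (extra terms are zero) or $n < k$ (the cap genuinely truncates).

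There is no real obstacle here: once Proposition \ref{P:tanabe} supplies the basis, the statement is a definitional reorganization of the counting. The only mild point to verify is that partitioning $\mathcal P_{\mathsf{even}}(k,n)$ by block count is exhaustive and disjoint, and that the summation bound $n$ correctly encodes the at-most-$n$-blocks constraint from the centralizer description.
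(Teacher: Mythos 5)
Your argument is correct and is essentially the paper's own (the paper simply states the result as an immediate consequence of Proposition \ref{P:tanabe}): the basis $\{T_d \mid d \in \mathcal P_{\mathsf{even}}(k,n)\}$ gives $\dimm \ZsG = |\mathcal P_{\mathsf{even}}(k,n)|$, and stratifying by the number of blocks $r$ yields $\sum_{r=1}^n \Tr(k,r)$. Your remark that $\Tr(k,r)=0$ for $r>k$ is a nice consistency check but not needed.
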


Recall that the transformations $\EE_\al^\beta$,  where $\al, \beta \in [1,n]^k$ and $\sum_{i=1}^k  \ve_{\al_i}  = \sum_{i=1}^k \ve_{\beta_i}$,
form a basis for $\ZsZ$.     Note that the condition $\sum_{i=1}^k  \ve_{\al_i}  = \sum_{i=1}^k \ve_{\beta_i}$ says that  ($\#j$ in $\al$) $\equiv$ ($\#j$ in $\beta$) $\modd 2$ for each $j \in [1,n]$.   For example,  if
$\al = (4,3,2,4,1)$ and $\beta = (3,1,3,3,2)$,   then this condition is satisfied.    Moreover, if we label the nodes
of a diagram with the components of $\al$ on the bottom, and the components of $\beta$ on top  and connect
nodes that have the same label, we obtain a diagram $d$ satisfying \eqref{eq:Tan},
since there is a $\sigma \in \Sr_n$ such that $\sigma(\al) = \zeta_d$ and
$\sigma(\beta) = \zeta_d'$ (in fact,  in this example $\sigma = (1\,4)(2 \, 3)$ will do the job).

$$ {\begin{tikzpicture}[scale=1.1,line width=1pt]
\foreach \i in {0,...,5}
{ \path (\i,1) coordinate (T\i); \path (\i,0) coordinate (B\i); }
%%%%%%%
\filldraw[fill= gray!50,draw=gray!50,line width=4pt]  (T1) -- (T5) -- (B5) -- (B1) -- (T1);
\draw[black] (T1) .. controls +(.1,-.25) and +(-.1,-.25) .. (T3);
\draw[black] (T3) .. controls +(.1,-.25) and +(-.1,-.25) .. (T4);
\draw[black] (T1) .. controls +(0,-.25) and  +(0,.25) .. (B2);
\draw[black] (T5) .. controls +(.1,-.25) and +(0,.25) .. (B3) ;
\draw[black] (T2) .. controls +(0,-.25) and +(0,.25) .. (B5) ;
\draw[black] (B1) .. controls +(.1,.35) and +(-.1,.35) .. (B4) ;
\draw  (B1)  node[black,below=0.2cm]{$\boldsymbol{4}$};
\draw  (B2)  node[black,below=0.2cm]{$\boldsymbol{3}$};
\draw  (B3)  node[black,below=0.2cm]{$\boldsymbol{2}$};
\draw  (B4)  node[black,below=0.2cm]{$\boldsymbol{4}$};
\draw  (B5)  node[black,below=0.2cm]{$\boldsymbol{1}$};
\draw  (T1)  node[black,above=0.2cm]{$\boldsymbol{3}$};
\draw  (T2)  node[black,above=0.2cm]{$\boldsymbol{1}$};
\draw  (T3)  node[black,above=0.2cm]{$\boldsymbol{3}$};
\draw  (T4)  node[black,above=0.2cm]{$\boldsymbol{3}$};
\draw  (T5)  node[black,above=0.2cm]{$\boldsymbol{2}$};
%\draw  (T0) node[black,below=0.35cm]{$\boldsymbol{d = }$};
\foreach \i in {1,...,5}
{ \fill (T\i) circle (2pt); \fill (B\i) circle (2pt); }
\end{tikzpicture}.}
$$
 Therefore,  $\EE_\al^\beta$ is one of the summands of $T_d$.
We note that $\EE_\al^\beta$ doesn't commute with $\GG(2,1,n)$.   But
it does commute with its normal subgroup $\ZZ_2^n$.  \medskip

\begin{example}  Suppose $n = 3$ and $k = 2$.     The dimension of $\Zs_2(\Sr_3)$ is
$\vert\mathcal P(2,3)\vert = \binombr{4}{1}  + \binombr{4}{2}  +  \binombr{4}{3}  = 14$ where the summands are Stirling numbers
of the 2nd kind.      There are only 4 diagrams $d \in \mathcal P(2,3)$  that
have $\leq 3$ blocks of even size; namely,  the ones pictured below,  where we have indicated $\zeta_d$
and $\zeta_d'$ on each diagram.

$$ {\begin{tikzpicture}[scale=1.1,line width=1pt]
\foreach \i in {0,...,10}
{ \path (\i,1) coordinate (T\i); \path (\i,0) coordinate (B\i); }
%%%%%%%
\filldraw[fill= gray!50,draw=gray!50,line width=4pt]  (T0) -- (T1) -- (B1) -- (B0) -- (T0);
\filldraw[fill= gray!50,draw=gray!50,line width=4pt]  (T3) -- (T4) -- (B4) -- (B3) -- (T3);
\filldraw[fill= gray!50,draw=gray!50,line width=4pt]  (T6) -- (T7) -- (B7) -- (B6) -- (T6);
\filldraw[fill= gray!50,draw=gray!50,line width=4pt]  (T9) -- (T10) -- (B10) -- (B9) -- (T9);
\draw[black] (T0) .. controls +(.1,-.10) and +(-.1,-.10) .. (T1);
\draw[black] (B0) .. controls +(.1,.10) and +(-.1,.10) .. (B1);
\draw[black] (T0) .. controls +(0, 0) and +(0,0) .. (B0);
\draw[black] (T1) .. controls +(0, 0) and +(0,0) .. (B1);
\draw[black] (T3) .. controls +(.1,-.10) and +(-.1,-.10) .. (T4);
\draw[black] (B3) .. controls +(0,.10) and  +(0,.10) .. (B4);
\draw[black] (T6) .. controls +(0,0) and +(0,0) .. (B7) ;
\draw[black] (T7) .. controls +(0,0) and +(0,0) .. (B6) ;
\draw[black] (T9) .. controls +(0,0) and +(0,0) .. (B9) ;
\draw[black] (T10) .. controls +(0,0) and +(0,0) .. (B10) ;
%\draw[black] (B1) .. controls +(.1,.45) and +(-.1,.45) .. (B4) ;
\draw  (B0)  node[black,below=0.2cm]{$\boldsymbol{1}$};
\draw  (B1)  node[black,below=0.2cm]{$\boldsymbol{1}$};
\draw  (B3)  node[black,below=0.2cm]{$\boldsymbol{1}$};
\draw  (B4)  node[black,below=0.2cm]{$\boldsymbol{1}$};
\draw  (B6)  node[black,below=0.2cm]{$\boldsymbol{1}$};
\draw  (B7)  node[black,below=0.2cm]{$\boldsymbol{2}$};
\draw  (B9)  node[black,below=0.2cm]{$\boldsymbol{1}$};
\draw  (B10) node[black,below=0.2cm]{$\boldsymbol{2}$};
\draw  (T0)  node[black,above=0.2cm]{$\boldsymbol{1}$};
\draw  (T1)  node[black,above=0.2cm]{$\boldsymbol{1}$};
\draw  (T3)  node[black,above=0.2cm]{$\boldsymbol{2}$};
\draw  (T4)  node[black,above=0.2cm]{$\boldsymbol{2}$};
\draw  (T6)  node[black,above=0.2cm]{$\boldsymbol{2}$};
\draw  (T7)  node[black,above=0.2cm]{$\boldsymbol{1}$};
\draw  (T9)  node[black,above=0.2cm]{$\boldsymbol{1}$};
\draw  (T10)  node[black,above=0.2cm]{$\boldsymbol{2}$};
\foreach \i in {0,1}
{ \fill (T\i) circle (2pt); \fill (B\i) circle (2pt); }
\foreach \i in {3,4}
{ \fill (T\i) circle (2pt); \fill (B\i) circle (2pt); }
\foreach \i in {6,7}
{ \fill (T\i) circle (2pt); \fill (B\i) circle (2pt); }
\foreach \i in {9,10}
{ \fill (T\i) circle (2pt); \fill (B\i) circle (2pt); }
\end{tikzpicture}}
$$
Assuming these diagrams are numbered $d_1, \dots, d_4$ from left to right, we have
\begin{align*} T_{d_1} &= \EE_{11}^{11} + \EE_{22}^{22} + \EE_{33}^{33} \\
T_{d_2} & = \EE_{11}^{22} + \EE_{22}^{33} + \EE_{33}^{11} + \EE_{11}^{33} + \EE_{33}^{22} + \EE_{11}^{22} \\
T_{d_3} & = \EE_{12}^{21} + \EE_{23}^{32} + \EE_{31}^{13} + \EE_{13}^{31} + \EE_{32}^{23} + \EE_{21}^{12} \\
T_{d_4} &= \EE_{12}^{12} + \EE_{23}^{23} + \EE_{31}^{31} + \EE_{13}^{13} + \EE_{32}^{32} + \EE_{21}^{21},
\end{align*}
and $\{T_{d_j} \mid j \in [1,4]\}$ is a basis for $\Zs_2\big(\GG(2,1,3)\big)$.   Note there are a total of  21 summands  $\EE_{\alpha}^\beta$ in these expressions.
According to Corollary \ref{C:cent},
$$\dimm \Zs_2(\ZZ_2^3) =  \frac{1}{2^3} \sum_{i=0}^3 {\binom{3}{i}} (3-2i)^4  =  \frac{1}{8}\left(3^4 + 3 + 3 + 3^4\right) = 21.$$

\end{example}

Recall that a  basis element $\EE_\alpha^\beta$ for $\ZsZ$  corresponds to
a partition diagram $d$ with $2k$ nodes  obtained by
 labeling  the nodes
of $d$  with the components of $\al$ on the bottom, and the components of $\beta$ on top.
Nodes  having the same label are connected by an edge.  The blocks have even
size,  and there are  $r$ blocks for some $r \leq n$.    Labeling the blocks with different numbers  amounts to
coloring  the blocks of $d$ with different colors chosen from $n$ colors.    Therefore,
there are $\displaystyle{\Tr(k,r) \frac{n\,!}{(n-r)!}}$ basis elements  $\EE_\alpha^\beta$  corresponding
to diagrams with $r$ blocks.   Combining this with Schur-Weyl duality  gives

\begin{prop}\label{P:dimZ}  \label{eq:dim2}   \begin{align}
\dimm\ZsZ & = \ \sum_{r=1}^n  \Tr(k,r) \frac{n\,!}{(n-r)!} \\
&= \text{the number  of  walks  of  $2k$  steps}  \nonumber \\
&  \quad \  \text{from  $\mathbf{0}$   to $\mathbf{0}$  on  the   $n$-cube}.\nonumber
\end{align}
\end{prop}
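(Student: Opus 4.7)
The plan is to verify the two equalities in Proposition \ref{P:dimZ} separately. The second equality, that $\dimm \ZsZ$ equals the number of closed walks of length $2k$ at $\mathbf{0}$ on the $n$-cube, is immediate from the Schur--Weyl duality dictionary set up in the introduction: the fourth bullet there gives $\dimm \Zs_k(\GG) = \ms_{2k}^{\mathbf{0}}$ for any finite group $\GG$ with faithful module $\VV$, and in our setting $\mathcal R_\VV(\ZZ_2^n)$ is precisely the $n$-cube. So the real work lies in the first equality.

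For that, I would count the basis elements $\EE_\alpha^\beta$ of $\ZsZ$ described in Theorem \ref{T:base} by sorting them according to an underlying labeled partition diagram. Given $(\alpha,\beta) \in [1,n]^k \times [1,n]^k$ with $\sum_i \ve_{\alpha_i} = \sum_i \ve_{\beta_i}$, form the diagram $d$ on $2k$ nodes by placing $\alpha_1,\dots,\alpha_k$ on the bottom row and $\beta_1,\dots,\beta_k$ on the top row, and declaring two nodes to be in the same block precisely when they carry the same label. The defining condition on $(\alpha,\beta)$ is equivalent to $(\#j \text{ in } \alpha) + (\#j \text{ in } \beta) \equiv 0 \pmod 2$ for each $j \in [1,n]$, which is exactly the statement that every block of $d$ has even size. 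Hence $d \in \mathcal P_{\mathsf{even}}(k,n)$.

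The assignment $(\alpha,\beta) \mapsto (d, \text{labeling of blocks})$ is then a bijection onto the set of pairs consisting of an unlabeled diagram $d \in \mathcal P_{\mathsf{even}}(k,n)$ with some number $r$ of blocks, together with an injection from the blocks of $d$ into $[1,n]$. Indeed, the inverse sends the pair $(d,\phi)$ to the unique $(\alpha,\beta)$ in which the label at each node is the image under $\phi$ of its block. Since there are $\Tr(k,r)$ diagrams in $\mathcal P_{\mathsf{even}}(k,n)$ with exactly $r$ blocks, and $n!/(n-r)!$ injections of an $r$-element set into $[1,n]$, summing over $r \in [1,n]$ gives
\[ \dimm \ZsZ \;=\; \sum_{r=1}^n \Tr(k,r)\,\frac{n!}{(n-r)!}. \]

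I do not expect a real obstacle here: the only place needing care is confirming the equivalence between the parity condition on $(\alpha,\beta)$ and the even-block condition on $d$, which is a one-line count since the size of the block labeled $j$ is $(\#j \text{ in } \alpha) + (\#j \text{ in } \beta)$. Everything else is a bookkeeping exercise combining Theorem \ref{T:base}, the diagrammatic setup of Section 4, and Proposition \ref{P:tanabe}, together with the Schur--Weyl identity $\dimm \ZsZ = \ms_{2k}^{\mathbf 0}$.
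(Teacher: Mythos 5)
Your proposal is correct and follows essentially the same route as the paper: both arguments take the basis $\{\EE_\alpha^\beta\}$ from Theorem \ref{T:base}, pass to the underlying even-block partition diagram, count diagrams with $r$ blocks by $\Tr(k,r)$ and block labelings by $n!/(n-r)!$, and obtain the second equality from the Schur--Weyl identity $\dimm \ZsZ = \ms_{2k}^{\mathbf 0}$. Your write-up is in fact a bit more careful than the paper's, since you make the bijection and the parity-versus-even-block equivalence explicit.
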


\begin{example}
\begin{align*} \dimm \Zs_2(\ZZ_2^3) &=  \Tr(2,1)\frac{3!}{2!} + \Tr(2,2)\frac{3!}{1!} \\
& = 1 \cdot 3 + 3 \cdot 6  = 21. \end{align*}  \end{example}

For $i \in [1,n]$, let $\ps_i: \VV \rightarrow \XX^{\ve_i}$ be the projection map onto the $i$th summand.
For $\alpha = (\alpha_1, \dots, \alpha_k) \in [1,n]^k$,  set
$$\ps_{\alpha} = \ps_{\alpha_1}  \ot \ps_{\alpha_2}\ot  \cdots \ot \ps_{\alpha_k} \in \End(\VV^{\ot k}).$$   Then  for
$\beta \in [1,n]^k$,  $\ps_{\alpha}(x_\beta)  = \prod_{j=1}^k \delta_{\alpha_j, \beta_j}  x_\alpha
= \delta_{\alpha,\beta} x_\alpha$.     Moreover,
$$\ps_\beta T_d  \ps_{\alpha}  = \EE_{\alpha}^\beta.$$
Combining the results of this section, we have

\begin{prop}\label{P:gens}  The elements $T_d$ such that $d \in \mathcal P_{\mathsf{even}}(k,n)$
together with the projections $\ps_\alpha$, $\alpha \in [1,n]^k$,  generate  the centralizer
algebra $\ZsZ$.  \end{prop}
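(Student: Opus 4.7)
The plan is to show that every basis element $\EE_\alpha^\beta$ of $\ZsZ$ (from Theorem~\ref{T:base}) can be written as a product of one $T_d$ and two projections $\ps_\gamma$, which together with the containment $\ZsG \subseteq \ZsZ$ and the commutation of each $\ps_\alpha$ with $\ZZ_2^n$ yields the claim.

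First I would verify that the generators actually lie in $\ZsZ$. For $T_d$ with $d \in \mathcal P_{\mathsf{even}}(k,n)$ this is immediate from Proposition~\ref{P:tanabe} together with the chain $\ZsG \subseteq \ZsZ$ coming from $\ZZ_2^n \subseteq \GG(2,1,n)$. For $\ps_\alpha$ with $\alpha \in [1,n]^k$, note that each $x_\beta$ is a simultaneous eigenvector for $\ZZ_2^n$ with $\es^\arm x_\beta = (-1)^{\arm\cdot(\ve_{\beta_1}+\cdots+\ve_{\beta_k})}x_\beta$, so $\ps_\alpha$ is diagonal in the eigenbasis and commutes with the $\ZZ_2^n$-action.

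Next I would reproduce, in a sentence or two, the identity $\ps_\beta T_d \ps_\alpha = \EE_\alpha^\beta$ stated just before the proposition: since $\ps_\gamma(x_\delta)=\delta_{\gamma,\delta}x_\gamma$, one computes directly that $\ps_\beta \EE_\gamma^{\gamma'}\ps_\alpha = \delta_{\gamma,\alpha}\delta_{\gamma',\beta}\EE_\alpha^\beta$, and then $\ps_\beta T_d \ps_\alpha$ picks out exactly the summand $\EE_\alpha^\beta$ of $T_d = \sum_{(\gamma,\gamma')\sim_{\Sr_n}(\zeta_d,\zeta_d')}\EE_\gamma^{\gamma'}$ in \eqref{eq:Td}, provided $(\alpha,\beta) \sim_{\Sr_n} (\zeta_d,\zeta_d')$ for the chosen $d$.

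The only point requiring an argument is the construction of such a $d$ for each basis element $\EE_\alpha^\beta$ of $\ZsZ$. Given $\alpha,\beta\in[1,n]^k$ with $\sum_i\ve_{\alpha_i}=\sum_i\ve_{\beta_i}$, I would define $d$ to be the set partition of $[1,2k]$ whose blocks are the level sets of the joint labeling by $(\alpha_1,\dots,\alpha_k,\beta_1,\dots,\beta_k)$, identifying positions $1,\dots,k$ with the bottom row and $k+1,\dots,2k$ with the top row. By Theorem~\ref{T:base} the condition $\sum_i\ve_{\alpha_i}=\sum_i\ve_{\beta_i}$ is equivalent to $(\#j\text{ in }\alpha)\equiv(\#j\text{ in }\beta)\pmod 2$ for every $j\in[1,n]$, so each block of $d$ has even size and the total number of blocks is at most $n$; that is, $d\in\mathcal P_{\mathsf{even}}(k,n)$. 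Any permutation $\sigma\in\Sr_n$ that sends the canonical block labels $1,2,\dots,r$ of $\zeta_d,\zeta_d'$ to the labels used by $\alpha,\beta$ (which exists since $r\le n$) witnesses $(\alpha,\beta)\sim_{\Sr_n}(\zeta_d,\zeta_d')$.

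With that $d$ in hand, the identity above gives $\EE_\alpha^\beta = \ps_\beta T_d \ps_\alpha$, so every basis element of $\ZsZ$ lies in the subalgebra generated by the $T_d$ and the $\ps_\alpha$, completing the proof. The main (minor) obstacle is bookkeeping in the block/label construction, namely checking that even-size blocks force $d\in\mathcal P_{\mathsf{even}}(k,n)$ and that the $\Sr_n$-orbit of $(\zeta_d,\zeta_d')$ really includes $(\alpha,\beta)$; everything else is a direct consequence of results already established in the paper.
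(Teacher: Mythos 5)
Your proposal is correct and follows essentially the same route as the paper, which establishes the proposition by combining the identity $\ps_\beta T_d \ps_\alpha = \EE_{\alpha}^\beta$ with the observation that each basis element $\EE_\alpha^\beta$ of $\ZsZ$ determines a labeled diagram $d \in \mathcal P_{\mathsf{even}}(k,n)$ with $(\alpha,\beta)\sim_{\Sr_n}(\zeta_d,\zeta_d')$. Your write-up merely makes explicit the bookkeeping (even block sizes, at most $n$ blocks, existence of the witnessing $\sigma$) that the paper leaves to the reader.
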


\begin{subsection}{The Bratteli diagram and a bijection} \end{subsection}

The \emph{Bratteli diagram} $\mathcal{B}_{\VV}(\ZZ_2^n)$ associated to the group  $\ZZ_2^n$ and  the module $\VV$
 is the infinite graph  with vertices labeled by the elements of  $\arm \in \ZZ_2^n$  on level $k$
 that can be reached by  a  walk of $k$ steps on the representation graph $\mathcal{R}_{\VV}(\ZZ_2^n)$. Such a walk corresponds to a sequence $\left(\arm^0,{\arm^1},  \ldots, {\arm^k}\right)$
 starting at $\arm^0 = \mathbf{0}= (0,\ldots,0)$,  such that  $\arm^j \in \ZZ_2^n$   for each $1 \le j \le k$, and  $\arm^{j} = \arm^{j-1} + \ve_i$ for some $i \in [1,n]$.  Thus,
$\arm^{j}$ is connected to $ \arm^{j-1}$   by an edge in $\mathcal{R}_{\VV}(\ZZ_2^n)$.
This corresponds to a unique path on $\mathcal{B}_\VV(\ZZ_2^n)$
starting at  $\mathbf{0}$ on the top  and going to  $\arm$ at level $k$.
The subscript on node  $\arm$ at level $k$  in $\mathcal{B}_\VV(\ZZ_2^n)$  indicates  the number
$\ms_k^\arm$  of such paths (hence, the number of walks on $\mathcal R_\VV(\ZZ_2^n)$ of $k$ steps
from $\mathbf{0}$ to $\arm$). This can be easily  computed by summing, in a Pascal triangle fashion, the subscripts of the vertices at level $k-1$ that are connected to $\arm$.  This is  the multiplicity of
the irreducible $\ZZ_2^n$-module $\XX^\arm$  in $\VV^{\ot k}$, which is also the
dimension of the irreducible $\ZsZ$-module  $\Zs_k^\arm$ by Schur-Weyl duality.  The sum of the squares of
those dimensions at level $k$ is the number on the right, which is the dimension of the centralizer algebra $ \ZsZ$.   Levels 0,1,\dots,6  of the Bratteli diagram for $n=3$ are displayed below, where  to simplify the notation we have omitted the commas in writing the
elements $\arm$ of $\ZZ_2^n$.

$$
\begin{tikzpicture}[yscale=-1, xscale=1.7]
	\coordinate (00) at (0,0);
	\foreach \x in {0, ..., 2}{\coordinate (1\x) at (\x,1.5);}
	\foreach \x in {0, ..., 3}{\coordinate (2\x) at (\x,3);}
	\foreach \x in {0, ..., 3}{\coordinate (3\x) at (\x,4.5);}
	\foreach \x in {0, ..., 3}{\coordinate (4\x) at (\x,6);}
	\foreach \x in {0, ..., 3}{\coordinate (5\x) at (\x,7.5);}
	\foreach \x in {0, ..., 3}{\coordinate (6\x) at (\x,9);}

%edges in lattice:
    \draw[] (00)--(40);
	\foreach \x in {0, ..., 2}{ \draw[] (00)--(1\x);}
    \draw[] (10)--(20);
    \draw[] (10)--(21);
    \draw[] (10)--(22);

    \draw[] (11)--(20);
    \draw[] (11)--(21);
    \draw[] (11)--(23);
    \draw[] (12)--(20);
    \draw[] (12)--(23);
    \draw[] (12)--(22);

    \draw[] (30)--(40);
    \draw[] (30)--(41);
    \draw[] (30)--(42);
    \draw[] (31)--(40);
    \draw[] (31)--(41);
    \draw[] (31)--(43);
    \draw[] (32)--(40);
    \draw[] (32)--(43);
    \draw[] (32)--(42);
     \draw[] (33)--(41);
    \draw[] (33)--(43);
    \draw[] (33)--(42);

     \draw[] (50)--(60);
    \draw[] (50)--(61);
    \draw[] (50)--(62);
    \draw[] (51)--(60);
    \draw[] (51)--(61);
    \draw[] (51)--(63);
    \draw[] (52)--(60);
    \draw[] (52)--(63);
    \draw[] (52)--(62);
     \draw[] (53)--(61);
    \draw[] (53)--(63);
    \draw[] (53)--(62);

    \foreach \x in {0, ..., 2}{ \draw[] (20)--(3\x);}
    \draw[] (21)--(30);
    \draw[] (21)--(31);
    \draw[] (21)--(33);

    \draw[] (22)--(30);
    \draw[] (22)--(32);
    \draw[] (22)--(33);

    \draw[] (23)--(31);
    \draw[] (23)--(32);
    \draw[] (23)--(33);

     \foreach \x in {0, ..., 2}{ \draw[] (40)--(5\x);}
    \draw[] (41)--(50);
    \draw[] (41)--(51);
    \draw[] (41)--(53);

    \draw[] (42)--(50);
    \draw[] (42)--(52);
    \draw[] (42)--(53);

    \draw[] (43)--(51);
    \draw[] (43)--(52);
    \draw[] (43)--(53);

\begin{scope}[very thick]
    \draw[] (00)--(11);
    \draw[] (11)--(20);
   \draw[] (20)--(32);
   \draw[] (32)--(40);
    \draw[] (40)--(52);
    \draw[] (52)--(43);
    \draw[] (43)--(33);
    \draw[] (33)--(22);
     \draw[] (22)--(12);
     \draw[] (12)--(00);
       \end{scope}
\begin{scope}[very thick,decoration={markings,mark=at position 0.7 with {\arrow{>}}}]
     \draw[postaction={decorate}] (00)--(11);
     \draw[postaction={decorate}] (20)--(32);
     \draw[postaction={decorate}] (11)--(20);
            \draw[postaction={decorate}](32)--(40);
                      \draw[postaction={decorate}](40)--(52);
 \draw[postaction={decorate}](52)--(43);
  \draw[postaction={decorate}](43)--(33);
   \draw[postaction={decorate}](33)--(22);
   \draw[postaction={decorate}](22)--(12);
    \draw[postaction={decorate}](12)--(00);
\end{scope}

\begin{scope}[every node/.style={fill=white, inner sep = 1pt}]
	\node at (00) {$(000)_{1}$};
	\node[left] at (-1, 0) { $k=0:$};
	\node[right] at (4, 0) {1};
    \node at (10) {$(100)_{1}$};
    \node at (11) {$(010)_1$};
    \node at (12) {$(001)_1$};
	\node[left] at (-1, 1.5) { $k=1:$};
	\node[right] at (4, 1.5) {3};

    \node at (20) {$(000)_3$};
    \node at (21) {$(110)_2$};
    \node at (22) {$(101)_2$};
    \node at (23) {$(011)_2$};

    \node[left] at (-1, 3) { $k=2:$};
    \node[right] at (4, 3) {21};

    \node at (30) {$(100)_7$};
    \node at (31) {$(010)_7$};
    \node at (32) {$(001)_7$};
    \node at (33) {$(111)_6$};

    \node[left] at (-1, 4.5) { $k=3:$};
    \node[right] at (4, 4.5) {183};

     \node at (40) {$(000)_{21}$};
    \node at (41) {$(110)_{20}$};
    \node at (42) {$(101)_{20}$};
    \node at (43) {$(011)_{20}$};

     \node[left] at (-1, 6) { $k=4:$};
    \node[right] at (4, 6) {1641};

     \node at (50) {$(100)_{61}$};
    \node at (51) {$(010)_{61}$};
    \node at (52) {$(001)_{61}$};
    \node at (53) {$(111)_{60}$};

    \node[left] at (-1, 7.5) { $k=5:$};
    \node[right] at (4, 7.5) {14763};

     \node at (60) {$(000)_{183}$};
    \node at (61) {$(110)_{182}$};
    \node at (62) {$(101)_{182}$};
    \node at (63) {$(011)_{182}$};

     \node[left] at (-1, 9) { $k=6:$};
    \node[right] at (4, 9) {132861};
\end{scope}
\end{tikzpicture}
$$

A pair  $(\varrho_1,\varrho_2)$  of paths $\varrho_1 = (\arm^0, \arm^1,\dots, \arm^k)$, \
$\varrho_2= (\br^0, \br^1,\dots, \br^k)$  starting from $\arm^0 = \br^0= \mathbf{0}=(0,\dots,0)$ at the top
and going  to $ \arm^k = \br^k = \arm \in \ZZ_2^n$  at level $k$ in  $\mathcal{B}_\VV(\ZZ_2^n)$
determines a closed path from  $\mathbf{0}$  to $\mathbf{0}$  by reversing the second path and concatenating
the two paths.   This is illustrated by the darkened edges in the diagram  above.
Such a pair  determines
two $k$-tuples  $\alpha = ({\alpha_1}, \dots,{\alpha_k})$, and $\beta = ({\beta_1},
\ldots,{\beta_k})$ in $[1,n]^k$, such that  $\arm^j  = \arm^{j-1} + \ve_{\alpha_j}$ for $j=1,\dots,k$,
and $\br^{j-1} = \br^j + \ve_{\beta_{k+1-j}}$ for $j = k,\dots, 1$.    Then since
$\ve_{\alpha_1} + \cdots +\ve_{\alpha_k} +
\ve_{\beta_1} + \cdots +\ve_{\beta_k} = \mathbf{0}$, the condition in Theorem \ref{T:base} is satisfied, and
 $\alpha$ and $\beta$ determine a
labeled partition diagram $d  \in \mathcal P_{\mathsf{even}}(k,n)$, in which nodes  with the same label are connected by an edge.    \medskip

\begin{para}\label{para:bij}  This process establishes a bijection between the pairs $(\varrho_1, \varrho_2)$
of paths from $\mathbf{0}$ at the top of the Bratteli diagram to
$\arm \in \ZZ_2^n$ at level $k$ and the basis elements $\EE_{\alpha}^\beta$ of
$\ZsZ$ such that  {\rm (a)}  $\alpha,\beta \in [1,n]^k$;   {\rm (b)}  $(\# \alpha_i = j) \equiv
(\#\beta_i = j) \modd 2$ for all $j \in [1,n]$;  and  {\rm (c)} $\sum_{i=1}^k \ve_{\alpha_i} =
\sum_{i=1}^k \ve_{\beta_i} = \arm$.  \end{para}

In the example above,  $\ve_2,\ve_2, \ve_3, \ve_3,\ve_3$ have been added
in succession to (000) to
arrive at $\arm = (001)$ so that $\alpha=(2,2,3,3,3)$;   and
 $\ve_2, \ve_1, \ve_2, \ve_1,\ve_3$ have been added  in succession to $\arm = (001)$
to return back $(000)$ so that $\beta = (2,1,2,1,3)$.    The resulting labeled partition diagram
$d \in \mathcal P_{\mathsf{even}}(5,3)$ is

$$ {\begin{tikzpicture}[scale=1.1,line width=1pt]
\foreach \i in {0,...,4}
{ \path (\i,1) coordinate (T\i); \path (\i,0) coordinate (B\i); }
%%%%%%%
\filldraw[fill= gray!50,draw=gray!50,line width=4pt]  (T0) -- (T4) -- (B4) -- (B0) -- (T0);
%\draw[black] (T0) .. controls +(.1,-.10) and +(-.1,-.10) .. (T1);
\draw[black] (B0) .. controls +(.1,.15) and +(-.1,.15) .. (B1);
\draw[black] (B2) .. controls +(.1,.15) and +(-.1,.15) .. (B3);
\draw[black] (B3) .. controls +(.1,.15) and +(-.1,.15) .. (B4);
\draw[black] (T1) .. controls +(.1,-.25) and +(-.1,-.25) .. (T3);
\draw[black] (T0) .. controls +(0, 0) and +(0,0) .. (B0);
\draw[black] (T2) .. controls +(0, 0) and +(0,0) .. (B1);
\draw[black] (T4) .. controls +(0, 0) and +(0,0) .. (B4);
\draw  (B0)  node[black,below=0.2cm]{$\boldsymbol{2}$};
\draw  (B1)  node[black,below=0.2cm]{$\boldsymbol{2}$};
\draw  (B2)  node[black,below=0.2cm]{$\boldsymbol{3}$};
\draw  (B3)  node[black,below=0.2cm]{$\boldsymbol{3}$};
\draw  (B4)  node[black,below=0.2cm]{$\boldsymbol{3}$};
\draw  (T0)  node[black,above=0.2cm]{$\boldsymbol{2}$};
\draw  (T1)  node[black,above=0.2cm]{$\boldsymbol{1}$};
\draw  (T2)  node[black,above=0.2cm]{$\boldsymbol{2}$};
\draw  (T3)  node[black,above=0.2cm]{$\boldsymbol{1}$};
\draw  (T4)  node[black,above=0.2cm]{$\boldsymbol{3}$};
\foreach \i in {0,...,4}
{ \fill (T\i) circle (2pt); \fill (B\i) circle (2pt); }
\end{tikzpicture}}
$$
which is identified with the
basis element $\EE_{22333}^{21213}$ of $\Zs_5(\ZZ_2^3)$.

\begin{remark} It is evident that the following hold:
\begin{itemize} \item[{\rm (i)}]  For $\arm \in \ZZ_2^n$,  a basis for the $\Zs_k(\ZZ_2^n)$-module
$\Zs_k^\arm \subseteq \VV^{\ot k}$ is
$$
\left \{x_{\alpha} = x_{\alpha_1} \ot \cdots \ot x_{\alpha_k} \, \big | \,
\alpha_j   \in  [1,n] \,  \text{ for all  $j \in \, [1,k]$, and  $\textstyle{ \sum_{j=1}^k} \ve_{\alpha_j} = \arm$} \right \}.$$
 \item[{\rm (ii)}]  $\es^\br \cdot x_\alpha =  (-1)^{\arm \cdot \br} x_\alpha$ for all $\br \in \ZZ_2^n$ and all $x_\alpha$
 in {\rm (i)} so that $\Zs_k^\arm$ is  also a $\ZZ_2^n$-submodule of $\VV^{\ot k}$;  it is the sum of all copies of  the
 $\ZZ_2^n$-module $\XX^\arm$ in $\VV^{\ot k}$.
\item[{\rm (iii)}]   $\End_{\ZZ_2^n}(\Zs_k^\arm)$ has a basis consisting of the transformations
$\EE_{\alpha}^\beta \in \End(\VV^{\ot k})$ such that \,
{\rm (a)}  $\alpha,\beta \in [1,n]^k$;  \,  {\rm (b)} $(\# \alpha_i = j) \equiv
(\#\beta_i = j) \modd 2$ for all $j \in [1,n]$;  and  {\rm (c)} $\sum_{i=1}^k \ve_{\alpha_i} =
\sum_{i=1}^k \ve_{\beta_i} = \arm$.
\end{itemize}
\end{remark}

\begin{subsection}{Poincar\'e series and exponential generating functions} \end{subsection}
The assumptions of Theorem \ref{T:Poin} hold for $\GG = \ZZ_2$ and $\VV = \XX^{\ve_1} \oplus
\cdots \oplus \XX^{\ve_n}$, so by \eqref{eq:main},  the Poincar\'e
series for the multiplicities $\ms_k^{\arm}$ of the irreducible $\ZZ_2^n$-module
labeled by $\arm \in \ZZ_2^n$ in $\TT(\VV) = \bigoplus_{k \geq 0} \VV^{\ot k}$
(hence, for the number $\ms_k^{\arm}$ of walks of $k$ steps from $\textbf{0}$ to $\arm$ on
the $n$-cube) is given by

\begin{equation}\label{eq:PZ}  \ms^{\arm}(t) = \sum_{k \geq 0} \ms_k^{\arm} t^k =   \frac{\det(\mathrm{M}^\arm)}{\det(\mathrm{I}-t\mathrm{A})} =
 \frac{\det(\mathrm{M}^\arm)}{\prod_{\cm \in \ZZ_2^n} \left(1- \chi_\VV(\cm)t\right)}, \end{equation}
where $\Ar$ is the adjacency matrix of the $n$-cube,  and $\mathrm M^\arm$ is the matrix
obtained from $\mathrm{I}-t\mathrm{A}$ by replacing column $\arm$ with $\underline{\delta} = \left (\begin{smallmatrix}  1\\ 0 \\ \vdots \\  \\ 0 \end{smallmatrix}\right )$.

Here we demonstrate how to compute these series.  Since
$\chi_\VV = \sum_{i=1}^n  \chi_{\ve_i}$,
the character values are
$$\chi_{\VV}(\cm)  = \sum_{i=1}^n  \chi_{\ve_i}(\cm) = \sum_{i=1}^n  (-1)^{\cm \cdot \ve_i} =
n - 2h(\cm), $$
where $h(\cm)$ is the Hamming weight of $\cm \in \ZZ_2^n$.
Elements with the same Hamming weight have the same character value on $\VV$.
Therefore the denominator in \eqref{eq:PZ} is given by

\begin{equation}\label{eq:denom}
\prod_{\cm \in \ZZ_2^n} \left(1- \chi_\VV(\cm)t\right)  =  \prod_{h=0}^n \big(1-(n-2h)t\big)^{\binom{n}{h}}
\end{equation}
In particular, when $n = 3$,
$$\prod_{\cm \in \ZZ_2^3} \left(1- \chi_\VV(\cm)t\right)  = (1-3t)(1-t)^3(1+t)^3(1+3t) = (1-9t^2)(1-t^2)^3.$$
Below we display the numerators for the various choices of $\arm \in \ZZ_2^3$
and the Poincar\'e series $\ms^{\arm}(t)$.   For elements of $\ZZ_2^3$ with the same
Hamming weight,  the numerators are the same,  as are the Poincar\'e series, so we  list
a single representative for each Hamming weight.  On the right we indicate the corresponding OEIS   label.

\begin{align}\label{eq:Z23}
\ms^{(000)}(t) = \displaystyle{\frac{(1-7t^2)(1-t^2)^2}{(1-9t^2)(1-t^2)^3 }} &=1+3t^2+21t^4+183t^6 + 1641 t^8 +\cdots & \hspace{-.5cm}  (\mathrm{A054879})   \nonumber   \\
\ms^{(100)}(t) = \displaystyle{\frac{t(1-3t^2)(1-t^2)^2}{(1-9t^2)(1-t^2)^3 }}  & = t+7t^3+61t^5+547t^7 + \cdots  & \hspace{-.5cm} (\mathrm{A066443})  \nonumber  \\
\ms^{(110)}(t) = \displaystyle{\frac{2t^2(1-t^2)^2}{(1-9t^2)(1-t^2)^3 }}  & = 2t^2+20t^4+182t^6 + 1640t^8 +  \cdots & \hspace{-.5cm}  (\mathrm{A125857})  \nonumber  \\
\ms^{(111)}(t) = \displaystyle{\frac{6t^3(1-t^2)^2}{(1-9t^2)(1-t^2)^3 }}  & = 6t^3+60t^5 +546 t^7 + \cdots
& \hspace{-.5cm}  (\mathrm{A054880})  \nonumber
 \end{align}
The  numbers appearing as coefficients in these series are the subscripts of the element $\arm$ in
the Bratteli diagram, and the exponent of $t$ indicates the level.

In the $n = 3$ example, the minimum polynomial of the
adjacency matrix $\Ar$ is  $p(t) = (t^2-1)(t^2-9) = t^4 - 10 t^2 + 9$, so by
\eqref{eq:recursion},   the multiplicities satisfy the recursion relation
$$\ms_{k+4}^\arm -10 \ms_{k+2}^\arm + 9\ms_k^\arm = 0$$
for all $k \geq 0$.      For example, when $\arm = (110)$,
$$\ms_8^{(110)} -10 \ms_6^{(110)} + 9\ms_4 ^{(110)} = 1640 - (10 \times 182) + (9 \times 20) = 0.$$

\medskip

Next we apply Theorem \ref{T:expgen} to determine information about  the exponential generating function
$\gr^\arm(t) = \displaystyle{\sum_{k \geq 0} \ms_{k}^{\arm}\,\frac{t^{k}}{k\,!}}$   for the multiplicities $\ms_k^\arm$
(i.e., for the number of  walks of $k$ steps from $\mathbf{0}$ to $\arm \in \ZZ_2^n$ on the $n$-cube) for
arbitrary $n$.

The eigenvalues of the adjacency matrix $\Ar$ of
the $n$-cube  are
$n-2h$, \ $h=0, 1, \ldots, n$, and  the  vectors $\cE_{\arm}$ in Corollary \ref{C:cube1}
 with $h(\arm) = h$  are the eigenvectors corresponding to the eigenvalue $n-2h$.   Since the
$\cE_{\arm}$, $\arm \in \ZZ_2^n$ form a basis for $\CC^{2^n}$,
it follows that  the minimal polynomial of $\Ar$ is
\begin{align*}p(t) &=  \prod_{h=0}^n (t-n+2h)\\
&= \begin{cases}
t(t^2-4)(t^2-16) \cdots (t^2-n^2), & \text{if \ $n$ \ is even,} \\
(t^2-1)(t^2-9) \cdots (t^2-n^2) & \text{if \ $n$ \ is odd.}
\end{cases}
\end{align*}
Suppose  $p(t)= t^{n+1} + p_{n}t^n + \cdots + p_1 t  + p_0 =  \prod_{h=0}^n (t-n+2h)$.
Then we know by Theorem \ref{T:expgen}  that $\gr^\arm(t)$ satisfies the differential equation
\begin{equation}\label{eq:deq} y^{(n+1)} + p_n y^{(n)} + \cdots + p_1 y^{(1)} + p_0 y  = 0,
\end{equation} and a  general solution of  \eqref{eq:deq} is a linear combination of the following
exponential functions
\begin{align*} 
&\er^{\pm nt}, \er^{\pm (n-2)t}, \ldots, \er^{\pm 2t}, 1& &\text{if \ $n$ \ is even} \\
&\er^{\pm nt}, \er^{\pm (n-2)t}, \ldots, \er^{\pm t}& &\text{if \ $n$ \ is odd.}
\end{align*}

\begin{thm}\label{T:expogen}
Let $\gr^{\arm}(t)$
be the exponential generating function for the number  $\ms_{k}^{\arm}$
of walks on the $n$-cube of $k$ steps starting at $\mathbf{0}$ and ending at $\arm$ (equivalently,  for the
dimension  of the irreducible modules $\Zs_k^{\arm}$ for the centralizer algebra $\Zs_k(\ZZ_2^n) = \End_{\ZZ_2^n}(\VV^{\ot k})$ for $k \geq 0$).
Then,
$$\gr^{\arm} (t) ={\sum_{k \geq 0} \ms_{k}^{\arm}\,\frac{t^{k}}{k\,!}} =\left( \cosh\,t\right)^{n-h}\left(\sinh\,t\right)^h$$
where $h = h(\arm)$, the Hamming weight of $\arm$,  and $\cosh\,t$ and $\sinh\,t$ are hyperbolic cosine and sine. 
\end{thm}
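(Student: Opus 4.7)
The plan is to give a direct combinatorial derivation using the fact that walks on the $n$-cube decompose coordinatewise. Since $\VV = \XX^{\ve_1}\oplus\cdots\oplus\XX^{\ve_n}$, each step on $\mathcal R_\VV(\ZZ_2^n)$ flips exactly one coordinate, so a walk of $k$ steps from $\mathbf 0$ to $\arm$ is recorded uniquely by a sequence $(i_1,\dots,i_k)\in [1,n]^k$, where $i_\ell$ is the coordinate flipped at step $\ell$. The walk terminates at $\arm$ if and only if for each $j\in[1,n]$, the number of indices $\ell$ with $i_\ell=j$ has the same parity as $\arm_j$ (odd when $\arm_j=1$, even when $\arm_j=0$). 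Hence
$$\ms_k^{\arm} \;=\; \sum \binom{k}{k_1,\,k_2,\,\ldots,\,k_n},$$
the sum taken over $(k_1,\dots,k_n)$ with $k_1+\cdots+k_n=k$ and $k_j\equiv \arm_j\pmod 2$ for every $j$.

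The next step is to pass to the exponential generating function. The multinomial identity above is precisely the statement that $\gr^\arm(t)$ factors, by the standard product rule for EGFs, as
$$\gr^{\arm}(t)\;=\;\sum_{k\geq 0}\ms_k^{\arm}\frac{t^k}{k!}\;=\;\prod_{j=1}^n\Biggl(\,\sum_{\substack{m\geq 0\\ m\equiv \arm_j\!\!\!\pmod 2}}\frac{t^m}{m!}\Biggr).$$
The $j$th factor is $\cosh t$ when $\arm_j=0$ and $\sinh t$ when $\arm_j=1$, so collecting the $h=h(\arm)$ factors of $\sinh t$ and the remaining $n-h$ factors of $\cosh t$ gives the claim $\gr^\arm(t)=(\cosh t)^{n-h}(\sinh t)^h$.

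There is no real obstacle here; the combinatorial decomposition of the walk into coordinate histories is immediate from the hypercube structure, and the EGF product rule does the rest. As a consistency check, one can verify the identity by plugging the closed form of Corollary \ref{C:cent} into $\gr^\arm(t)=\sum_k \ms_k^\arm t^k/k!$ and summing termwise to obtain
$$\gr^{\arm}(t)\;=\;\frac{1}{2^n}\sum_{i=0}^{n}\sum_{j=0}^{h}(-1)^j\binom{h}{j}\binom{n-h}{i-j}e^{(n-2i)t},$$
then expanding $(\cosh t)^{n-h}(\sinh t)^h$ via $\cosh t=(e^t+e^{-t})/2$ and $\sinh t=(e^t-e^{-t})/2$ and matching the coefficient of each $e^{(n-2i)t}$; the two expansions agree term by term. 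Alternatively, one checks that $(\cosh t)^{n-h}(\sinh t)^h$ solves the ODE of Theorem \ref{T:expgen} associated with the minimum polynomial $\prod_{h=0}^n(t-n+2h)$ of the adjacency matrix $\Ar$, with the correct initial data $\gr^\arm(0)=\delta_{\arm,\mathbf 0}$; unique solvability then forces the stated formula.
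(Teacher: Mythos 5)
Your proof is correct and follows essentially the same route as the paper's: both identify $\ms_k^{\arm}$ with the number of words in $[1,n]^k$ whose letter multiplicities have the parities prescribed by $\arm$, and both recognize the resulting sum of multinomial coefficients as the coefficient of $t^k/k!$ in $(\cosh\,t)^{n-h}(\sinh\,t)^h$. Your passage to the product via the standard exponential-generating-function product rule is cleaner than the paper's explicit enumeration over partitions of $k$ with the bookkeeping factors $h!$ and $(n-h)!/(n-r)!$, but the underlying decomposition is identical (your closing ODE remark would need all $n+1$ initial values $\ms_0^{\arm},\dots,\ms_n^{\arm}$, not just $\gr^{\arm}(0)$, to pin down the solution, though it is offered only as a consistency check).
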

\begin{proof}
As we have noted earlier,  $\ms_k^\arm = \ms_k^\br$ for all $k\geq 0$ whenever $h(\arm) = h(\br)$, so it
suffices to assume  $\arm = (1,\dots,1,0,\dots,0) \in \ZZ_2^n$,  where there are $h = h(\arm)$ ones.
The multiplicity $\ms_{k}^{\arm}$ is the number of (ordered) $k$-tuples
$\alpha =(\alpha_1, \ldots, \alpha_k)\in [1,n]^k$ such that
$$
\ve_{\alpha_1} + \cdots + \ve_{\alpha_k}=\arm.
$$
For each $j=1, \ldots, h$,  the  number of $\alpha_i$ in $\alpha$  equal to $j$  is odd,
and for each $j=h+1, \ldots, n$, the number of $\alpha_i$ in  $\alpha$ equal to $j$ is even.
Thus, the multiplicity  is obtained by the following computation:
\begin{equation} \label{eq:mmk}
\ms_{k}^{\arm}
=  \sum_{r=1}^n\sum \, \frac{1}{\ell^\lam_1!\,\ell^\lam_2! \,\dots}\,\,{\binom{k}{\lam_1}} {\binom{k-\lam_1}{\lam_2}} \ \cdots \ {\binom{\lam_r}{\lam_r}}  \times h\,! \times \frac{(n-h)!}{(n-r)!},
 \end{equation}
where the second sum is over all partitions $\lambda=(\lambda_1, \ldots, \lambda_r)$ of $k$ with $r$ nonzero parts, such that
$$
 \lambda_1+\lambda_2+\cdots +\lambda_r=k, \quad \lambda_1 \ge \cdots \ge \lambda_h, \ \quad \  \lambda_{h+1}\ge \cdots \ge \lambda_r;
$$
$\lambda_1, \ldots, \lambda_h$ are odd numbers;   and $\lambda_{h+1}, \ldots, \lambda_r$ are even numbers.
The  factor $h!$ in \eqref{eq:mmk}  counts the number of ways to assign a number  from $\{1, \ldots, h\}$ to each odd number $\lambda_i$, $i=1, \ldots, h$.
Similarly, the factor ${(n-h)!}\,/\,{(n-r)!}=(n-h)\times \cdots \times \left(n-h-(r-h-1)\right)$ counts the ways  to assign a number from $\{h+1, \ldots, n\}$ to each even number $\lambda_i$, $i=h+1, \ldots, r$. 

Therefore,
\begin{equation} \label{eq:mmk2}
\frac{1}{k\,!}\ms_{k}^{\arm}
= \sum_{r=1}^n \sum \, \frac{1}{\ell^\lam_1!\,\ell^\lam_2! \,\dots}\,\,{\frac{1}{\lam_1!}} {\frac{1}{\lam_2!}} \ \cdots \ {\frac{1}{\lam_r!}}  \times h\,! \times \frac{(n-h)!}{(n-r)!}\,.
 \end{equation}
For a fixed value of  $r$,  the expression in the inner sum of \eqref{eq:mmk2} can be gotten by
summing the coefficients of products of $r-h$ different factors from
$\big(\cosh\,t\big)^{n-h} =\displaystyle{\left(\sum_{j=0}^\infty \frac{t^{2j}}{(2j)!}\right)^{n-h}}$
and $h$ different factors from
$\big(\sinh\,t\big)^{h} =\displaystyle{\left(\sum_{j=0}^\infty \frac{t^{2j+1}}{(2j+1)!}\right)^{h}}$
such that the total power of $t$ of those $r$ factors  is $k$.   Hence,
$\displaystyle{\frac{1}{k!} \, \ms_{k}^{\mathbf{\arm}}}$ equals the coefficient of $t^{k}$ in
$$
 \big(\cosh\,t\big)^{n-h} \big(\sinh\,t\big)^{h}=
\displaystyle{\left(\sum_{j=0}^\infty \frac{t^{2j}}{(2j)!}\right)^{n-h}}
\displaystyle{\left(\sum_{j=0}^\infty \frac{t^{2j+1}}{(2j+1)!}\right)^h}.
$$
\end{proof}

In the special case that $\arm = \mathbf 0$,  Theorem \ref{T:expogen} implies  the following

\begin{cor}\label{C:expo}
Let $\gr^{\mathbf{0}}(t)=\displaystyle{\sum_{k \geq 0} \ms_{2k}^{\mathbf{0}}\frac{t^{2k}}{(2k)!}}$
be the exponential generating function for the number  $\ms_{2k}^{\mathbf{0}}$
of walks on the $n$-cube of $2k$ steps starting and ending at $\mathbf{0}$ (equivalently,  for the
dimension $\ms_{2k}^{\mathbf{0}}$  of the space of  $\ZZ_2^n$-invariants  in $ \VV^{\ot (2k)}$;
equivalently, for the dimension of the centralizer algebra $\Zs_k(\ZZ_2^n)$ for $k \geq 0$).
Then, 
\begin{equation}\label{eq:inv} 
\gr^{\mathbf{0}}(t)=\big(\cosh\,t\big)^n = \left(\frac{\er^t + \er^{-t}}{2}\right)^n  =  \frac{1}{2^n}\sum_{i=0}^n {n \choose i} \er^{(n-2i) t}
\end{equation}
\end{cor}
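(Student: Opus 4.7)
The plan is to derive Corollary \ref{C:expo} as an immediate specialization of Theorem \ref{T:expogen} to the case $\arm = \mathbf{0}$, followed by an elementary binomial expansion of $\cosh t$. No new combinatorial ideas are needed: all the work has already been carried out in the proof of the theorem, and the corollary simply records what happens at the distinguished node.

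First I would observe that $\mathbf{0} = (0,\dots,0)$ has Hamming weight $h(\mathbf{0}) = 0$. Substituting $h = 0$ into the formula $\gr^{\arm}(t) = (\cosh t)^{n-h}(\sinh t)^h$ of Theorem \ref{T:expogen} kills the $\sinh$-factor (since $(\sinh t)^0 = 1$) and leaves $\gr^{\mathbf{0}}(t) = (\cosh t)^n$, which is the first equality in \eqref{eq:inv}. For the second equality I would write $\cosh t = (\er^t + \er^{-t})/2$ and apply the binomial theorem to $(\er^t + \er^{-t})^n$: the $i$th term contributes $\binom{n}{i}\er^{(n-i)t}\er^{-it} = \binom{n}{i}\er^{(n-2i)t}$, and after pulling out the $2^{-n}$ the stated identity falls out.

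The only step requiring a comment is the parenthetical identification in the statement of the corollary of $\ms_{2k}^{\mathbf{0}}$ with $\dim \Zs_k(\ZZ_2^n)$; this is exactly the Schur-Weyl duality relation $\dim \Zs_k(\GG) = \ms_{2k}^{\mathbf{0}}$ recorded in the fourth bullet after \eqref{eq:cent}. As a sanity check, only even powers of $t$ appear in the Taylor series of $(\cosh t)^n$, consistent with the elementary fact that every closed walk on the $n$-cube has even length. There is no genuine obstacle here: the corollary is a bookkeeping consequence of the theorem, and the main theorem is where the analytic and combinatorial content resides.
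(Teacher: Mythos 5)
Your proposal is correct and matches the paper's treatment exactly: the paper derives the corollary by setting $h(\mathbf{0})=0$ in Theorem \ref{T:expogen} and then expanding $\left(\frac{\er^t+\er^{-t}}{2}\right)^n$ by the binomial theorem. Nothing further is needed.
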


\begin{remark} Corollary \ref{C:expo} implies 
\begin{align*} \gr^{\mathbf 0}(t) &= \frac{1}{2^n}\sum_{i=0}^n {n \choose i} \er^{(n-2i) t}  = \frac{1}{2^n}\sum_{i=0}^n {n \choose i}
\left(\sum_{r\geq 0} \frac{ (n-2i)^r t^r} {r\,!}\right)   \\
&= \sum_{r\geq 0} \left( \frac{1}{2^n}\sum_{i=0}^n {n \choose i} (n-2i)^{r} \right) \frac{t^r}{r\,!}, \end{align*} 
so that  the number of walks of $r$ steps from $\mathbf 0$ to $\mathbf 0$ is $\ms_{r}^{\mathbf 0}  = \displaystyle{ \frac{1}{2^n}\sum_{i=0}^n {n \choose i} (n-2i)^r}$, which is 0 unless $r = 2k$ for some $k \geq 0$.  Compare  Corollary \ref{C:cube2} with $\br = \cm = \mathbf 0$.
\end{remark} 

\begin{remark}  One may define $(\cosh\, t)^x$ for any $x \in \CC$, in particular,  for $x = -1$.   In that case 
$$(\cosh\, t)^{-1} = \sum_{j\geq 0}  \mathrm{E}_j  \frac{t^j}{j\,!},$$  where $\mathrm E_j$ is the $j$th Euler number.  
Generalized Euler numbers arising from the series expansion of  $(\cosh\, t)^{-x} = (\mathsf{sech}\,t)^x$ have been
studied and shown to have connections with Stirling numbers of the first and second kind  (see for example,  \cite{L}, \cite{KJR}).   
The  $\ms_{2k}^{\mathbf 0}$ in Corollary \ref{C:expo} are examples of such generalized Euler numbers.  
\end{remark} 

\begin{section}{Other directions} \end{section}

We conclude with a few remarks on some other directions which have been investigated that are related to walks on the $n$-cube.
\medskip

\begin{remark}  Let $\mathrm{R}, \mathrm{L}$ be the raising and lowering transformations on $\mathsf{span}_\CC\{ \underline \br \mid \br \in \ZZ_2^n\}$ defined by
$$ \mathrm{R}(\underline{\br})  = \sum_{i,  h(\br+\ve_i) > h(\br)}  \underline{\br + \ve_i} \qquad  \qquad
 \mathrm{L}(\underline \br)  = \sum_{i,  h(\br+\ve_i) < h(\br)}  \underline{\br + \ve_i}, $$  
and let $ \Ar^*(\underline \br) =  \big(n-2h(\br)\big) \underline{\br}.$
  Then $\mathrm{R} + \mathrm{L} = \Ar$ (the adjacency matrix of the $n$-cube),  and $\mathrm{R}, \mathrm{L}, \Ar^*$  determine  a canonical
  basis for a copy of $\mathfrak{sl}_2$ such that  $[\mathrm{L}, \mathrm{R}] = \Ar^*$,  $[\Ar^*, \mathrm{L}] = 2\mathrm{L}$ and
  $[\Ar^*, \mathrm{R}] = -2\mathrm{R}$.     The transformations  $\Ar, \Ar^*$ form a tridiagonal pair and generate the Terwilliger algebra
  of the $n$-cube. The details can be found in  \cite{G}.   
 \end{remark} 
 
 \begin{remark}  In  \cite{DG2}, Diaconis and Graham  considered the Markov chain  arising from  the affine 
walk on the $n$-cube given by $X_r = C X_{r-1} + \epsilon_r$, with $X_r \in \ZZ_2^n$, $C$ an 
invertible matrix with entries in  $\ZZ_2$, and  $\epsilon_r$ a random vector  in $\ZZ_2^n$ of disturbance terms.  Their analysis of such walks  relies on  codes made from binomial coefficients  $\modd 2$. 
When $C$ is the lower triangular matrix of a single Jordan block corresponding to the eigenvalue 1,  and the random vector is nonzero,  the distribution of $X_r$  tends to the uniform distribution on $\ZZ_2^n$.   Without the random vector, $X_r$ is a deterministic walk that returns to the starting point  in $n$ steps.  (See also \cite{DGM} for more on
random walks on the $n$-cube.)
\end{remark}

\medskip

 \noindent \textit{\small Department of Mathematics, University of Wisconsin-Madison, Madison, WI 53706, USA}\\
{\small benkart@math.wisc.edu}

\noindent \textit{\small Department of Mathematics, Sejong University,
Seoul, 133-747,  Korea  (ROK)}\\
{\small dhmoon@sejong.ac.kr}

 \end{document}